\newcommand{\ZG}{\mathbb{Z}_{\geq 0}}
\newcommand{\Z}{\mathbb{Z}}
\newcommand{\Q}{\mathbb{Q}}
\newcommand{\N}{\mathbb{N}}
\newcommand\aug{\fboxsep=-\fboxrule\!\!\!\fbox{\strut}\!\!\!}
\newcommand{\stargraphout}[2]{\begin{tikzpicture}
    \node[circle,fill=black] at (360:0mm) (center) {};
    \foreach \n in {1,...,#1}{
        \node at ({\n*360/#1}:#2cm) (n\n) {};
        \draw[->, ultra thick, red] (center)--(n\n);
        \node at (0,-#2*1.5) {$t_1=\{a^{#1}\}$}; 
    }
\end{tikzpicture}}
\newtheorem{lem}{Lemma}
\newtheorem{thm}{Theorem}
\newtheorem{cor}{Corollary}
\newtheorem{rem}{Remark}
\theoremstyle{definition}
\newtheorem{defn}{Definition}
\newtheorem{ex}{Example}
\definecolor{lavender}{rgb}{0.71, 0.49, 0.86}
\definecolor{lapislazuli}{rgb}{0.15, 0.38, 0.61}
\definecolor{emerald}{rgb}{0.31, 0.78, 0.47}
\definecolor{frenchrose}{rgb}{0.96, 0.29, 0.54}
\begin{document}

\begin{frontmatter}

\title{Analysis and Algorithmic Construction of Self-Assembled DNA Complexes}

\author[label1]{Cory Johnson}
\address[label1]{California State University, San Bernardino, Department of Mathematics, San Bernardino, CA}
\ead{corrine.johnson@csusb.edu}

\author[label2]{Andrew Lavengood-Ryan}
\address[label2]{Nevada State University, Department of Data, Media, and Design, Henderson, NV}
\ead{Andrew.Lavengood-Ryan@nevadastate.edu}

\begin{abstract}
DNA self-assembly is an important tool that has a wide range of applications such as building nanostructures, the transport of target virotherapies, and nano-circuitry. Tools from graph theory can be used to encode the biological process of DNA self-assembly. The principle component of this process is to examine collections of branched junction molecules, called pots, and study the types of structures that can be constructed. We restrict our attention to pots which contain one set of complementary cohesive-ends, i.e. a single bond-edge type, and we identify the types and sizes of structures that can be built from such a pot. In particular, we show a dependence between the order of graphs in the output of the pot and the number of arms on the corresponding tiles. Furthermore, we provide two algorithms which will construct complete complexes for a pot with a single bond-edge type.
\end{abstract}

\begin{keyword}
graph theory \sep graph algorithms \sep DNA self-assembly \sep flexible tile model 
\end{keyword}

\end{frontmatter}

\section{Introduction}

DNA self-assembly is a vital experimental process that is being utilized in labs across the country. The use of DNA self-assembly as a bottom-up technology for creating target nanostructures was  introduced in Seeman's laboratory in the 1980s \cite{Seeman82}.  The process relies on the complementary nature of nucleotides that comprise the structure of DNA. DNA self-assembly has applications ranging from the construction of nanostructures to experimental virotherapies \cite{ellis2014minimal,seeman2007overview}. Graphs are natural mathematical models for DNA self-assembled structures and we use a combination of graph theoretic and algebraic tools to optimize the assembly process.

The nature of the nucleotide base pairing allows DNA to be configured into a variety of shapes, such as hairpins, cubes, and other non-traditional structures \cite{chen1991, goodman2005, mao1999, shih2004,zhang1994}. The nature of the nucleotide base pairing may also be utilized to build larger structures \cite{chen1991,shih2004,zhang1994}.  Two models of the assembly process emerge: a model which utilizes rigid tiles \cite{winfree1998, winfree1999universal, rothemund2004algorithmic}, and the other using flexible tiles \cite{jonoska2003computation, jonoska2003self}. We study the flexible tile model which has been used to construct structures such as the cube and truncated octahedron \cite{chen1991,shih2004,zhang1994}. A detailed description of the graph theoretic model of flexible tile DNA self-assembly can be found in \cite{Johnson2021, Ellis2019, ellis2014minimal}.

In the DNA self-assembly process, target structures are built from \textit{branched junction molecules}, which are asterisk-shaped molecules whose arms consist of strands of DNA. The end of each arm contains  unsatisfied nucleotide bases creating a \textit{cohesive-end}. Each cohesive-end will bond with a complementary cohesive-end from another arm via Watson-Crick base pairing. We will formalize this process in Section \ref{sec:DNAcomplexes}.  Rather than referring to the precise nature of a cohesive-end (such as the exact nucleotide configuration), we use single alphabet letters to distinguish between cohesive-ends of different types. For example, $a$ and $b$ denote two non-compatible cohesive-ends, but cohesive-end $a$ will bond with cohesive-end $\hat{a}$ . We use the term \textit{bond-edge type} to refer to a pair of complementary cohesive-ends.

A collection of branched junction molecules  used in the self-assembly process is called a pot. Previous research has investigated questions arising from  determining the most efficient pot given a target complete complex \cite{Johnson2021, Ellis2019, ellis2014minimal}. We study the inverse question: given a pot, what are the complete complexes that can be assembled? In \cite{Johnson2021}, it was shown that determining if a given pot will realize a graph of a given order is NP-hard. Thus,  we restrict our attention to specialized cases; in particular, we study the case where the pot contains one bond-edge type. At this time we reserve our attention to three open questions:

\begin{enumerate}

\item What are the sizes of the DNA complexes that can be realized by a specific pot?

\item What types and what distributions of branched junction molecules does a pot use in realizing a target DNA complex?

\item Exactly what types of DNA complexes do we expect a pot to realize? (e.g. disconnected or connected complexes)

\end{enumerate}

 Section \ref{sec:DNAcomplexes} formalizes the graph theoretic model of the  DNA self-assembly process. Section \ref{sec:Results} is a collection of our results related to the three questions above, with Section \ref{sec:Algorithms} providing algorithms for producing connected graphs. We end in Section \ref{sec:Conclusion} with some insight into future directions.


\section{Encoding DNA Complexes using Graph Theory} \label{sec:DNAcomplexes}

The following graph theoretic model of DNA self-assembly is consistent with \cite{Johnson2021, Ellis2019, ellis2014minimal, JonoskaSpectrum}. Relevant definitions are copied here for the reader. A graph $G$ consists of a set $V = V(G)$ of vertices and a set $E = E(G)$ of 2-element subsets of $V$, called edges. Note that we allow for $G$ to be a multigraph.

A DNA complex is composed of $k$-armed branched junction molecules, which are asterisk-shaped molecules with $k$ arms of strands of DNA. Two arms can bond only if they have complementary base pairings. See Figure \ref{fig:tileex1} for an example of a branched junction molecule along with an example of Definition \ref{defn:main}. Definition \ref{defn:main} translates the biological process of self-assembly into a combinatorial representation.

\begin{defn} \label{defn:main}
Consider a $k$-armed branched junction molecule.

\begin{enumerate}

\item A $k$-armed branched junction molecule is modeled by a \textit{tile}. A tile is a vertex with $k$ half-edges representing the \textit{cohesive-ends} (or arms) of the molecule $a,b,c,\ldots.$ We will denote complementary cohesive-ends with $\hat{a},\hat{b},\hat{c},\ldots.$

\item A \textit{bond-edge type} is a classification of the cohesive-ends of tiles (without regard to hatted and unhatted letters). For example, $a$ and $\hat{a}$ will bond to form bond-edge type $a$.

\item We denote tile types by $t_j$, where $t_j = \{a_1^{e_1},\hat{a}_1^{e_2},\ldots,a_k^{e_{2k-1}},\hat{a}_k^{e_{2k}},\ldots\}$. The exponent on $a_i$ indicates the quantity of cohesive-ends of type $a_i$ present on the tile.

\item A \textit{pot} is a collection of distinct tile types such that for any cohesive-end type that appears on any tile in the pot, its complement also appears on some tile in the pot. We denote a pot by $P$.

\item It is our convention to think of bonded arms (that is, where cohesive-end $a_i$ has been matched with cohesive-end $\hat{a_i}$) as edges on a graph, and we think of the bond-edge type as providing direction and compatibility of edges. Unhatted cohesive-ends will denote half-edges directed away from the vertex, and hatted cohesive-ends will denote a half-edge directed toward the vertex. When cohesive-ends are matched, this will result in a directed edge pointing away from the tile that had an unhatted cohesive end and toward the vertex that had a hatted cohesive end.

\end{enumerate}

\end{defn}

\begin{defn}\cite{Ellis2019}
    An \textit{assembling pot} $P_\lambda(G)$ for a graph $G$ with assembly design $\lambda$ is the multiset of tiles $t_v$ where $t_v$ is associated to vertex $v \in V(G)$. Note that it is possible that tile $t_u = t_v$ even if $u \neq v$. If we view a vertex $v$ as its set of half-edges and a tile as a multiset of labels, then the labeling $\lambda$ can be used to map vertices to tiles by $\lambda: V \to P_\lambda(G)$ such that $\lambda(v) = t_v$.
\end{defn}

\begin{figure}[H]
\begin{subfigure}{0.4\textwidth}
\begin{center}
\includegraphics[width=\textwidth]{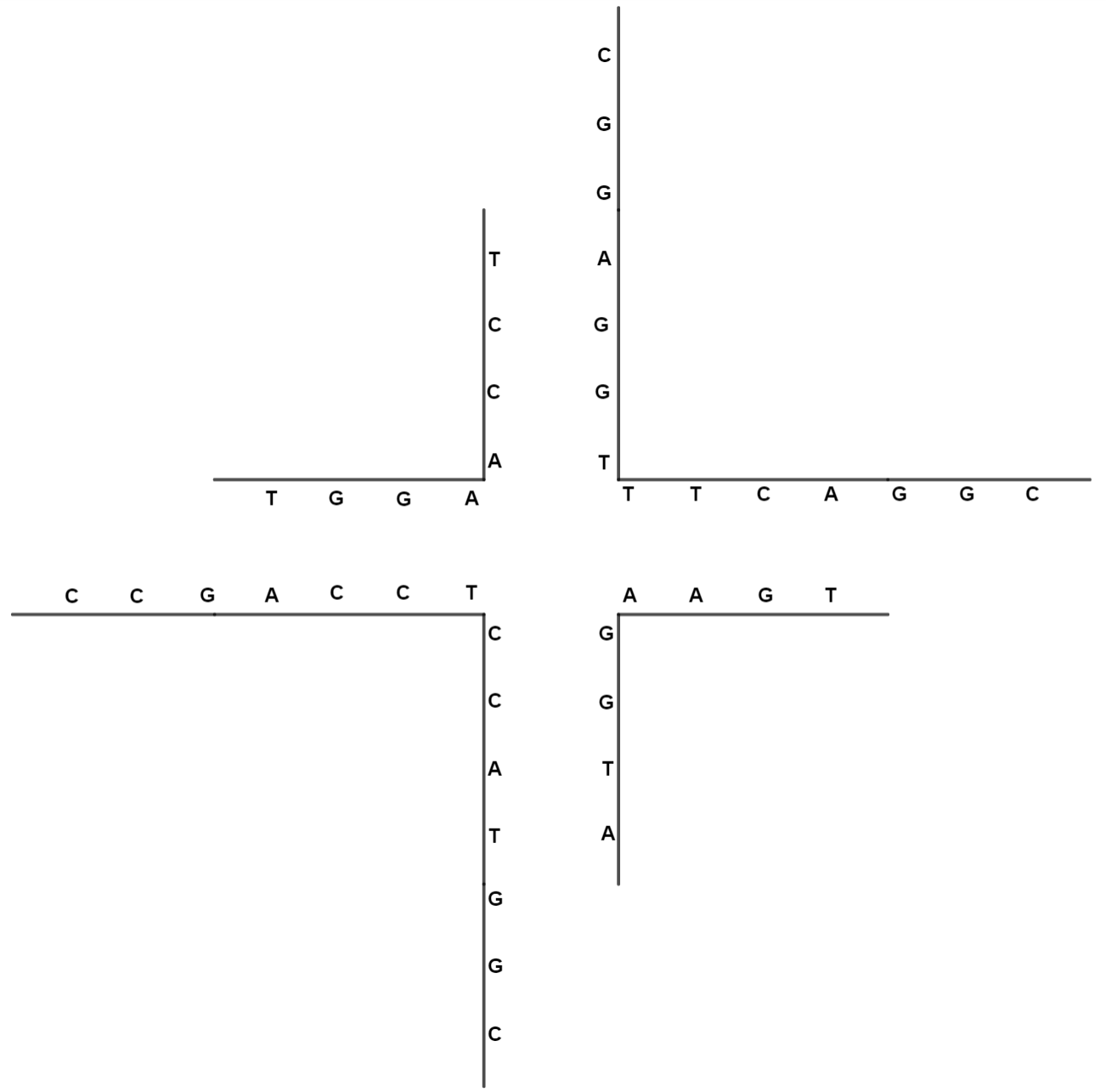}
\end{center}
\caption{Example of a 4-armed branched junction molecule}
\label{fig:tileex1a}
\end{subfigure}
\hfill
\begin{subfigure}{0.4\textwidth}
\begin{center}
\stargraphout{4}{2}
\end{center}
\caption{Tile with four cohesive-ends of type $a$}
\label{fig:tileex1b}
\end{subfigure}
\caption{A branched junction molecule and its associated tile representation.}
\label{fig:tileex1}
\end{figure}



\begin{defn}

We say that a graph $G$ is \textit{realized} by a pot $P$ if there exists a map $f: \{v_*\} \to P$ from the set of vertices with half-edges to the tile types with the following properties:

\begin{enumerate}
    \item If $v_* \mapsto t$, then there is an associated one-to-one correspondence between the cohesive ends of $t$ and the half-edges of $v$.
     
    \item If $\{u,v\} \in E(G)$, then the two half-edges of $\{u,v\}$ are assigned complementary cohesive ends.
\end{enumerate}

\end{defn}


The following result from \cite{ellis2014minimal} provides a foundation for the work presented in Section \ref{sec:Results}. Let $P = \{t_1,\ldots,t_p\}$ be a pot with $p$ tile types, and define $A_{i,j}$ to be the number of cohesive ends of type $a_i$ on tile $t_j$ and $\hat{A}_{i,j}$ to be the number of cohesive ends of type $\hat{a}_i$ on tile $t_j$. Suppose a target graph $G$ of order $n$  is realized by $P$ using $R_j$ tiles of type $j$. Since we consider only complete complexes, we have the following equations:
\begin{equation}\label{equ:Rj}
\sum_j R_j =n \text{ and } \sum_j R_j(A_{i,j}-\hat{A}_{i,j})=0 \qquad \text{for all }i.
\end{equation}

Define $z_{i,j} = A_{i,j} - \hat{A}_{i,j}$ and $r_j$ to be the proportion of tile-type $t_j$ used in the construction of $G$. Then the equations in Equation \ref{equ:Rj} become
\begin{equation}\label{equ:sum}
\sum_j r_j = 1 \text{ and } \sum_j r_jz_{i,j}=0 \qquad \text{for all } i.
\end{equation}
The equations in Equation \ref{equ:sum} naturally define a matrix associated to $P$.

\begin{defn}
Let $P = \{t_1, t_2, \ldots, t_p\}$ be a pot. Then the \textit{construction matrix} of $P$ is given by
\[
M_P = \begin{blockarray}{cccccc}
& t_1 & t_2 & \cdots & t_p & \hspace{0.5cm}\\
\begin{block}{c[cccc|c]}
						a_1 &	z_{1,1} & z_{1,2} & \cdots & z_{1,p} & 0 \\
						a_2 &	z_{2,1} & z_{2,2} & \cdots & z_{2,p} & 0 \\
						\vdots & 	\vdots & \vdots & \ddots & \vdots & \vdots \\
						a_m &	z_{m,1} & z_{m,2} & \cdots & z_{m,p} & 0 \\
						 & 		1 & 1 & \cdots & 1 & 1 \\
\end{block}
\end{blockarray}.
\]
\end{defn}

In general, there are infinitely many solutions to the system of equations defined by $M_P$, so it is desirable to concisely express these solutions. However, we only consider those solutions in $\left(\mathbb{Q} \cap [0,1]\right)^p$. That is, vectors whose entries are rational numbers between 0 and 1.

\begin{defn}\label{defn:spectrum}
The solution space of $M_P$ is called the spectrum of $P$, and is denoted by $\mathcal{S}(P)$.
\end{defn}

The following lemma from \cite{ellis2014minimal}  indicates when a solution to the construction matrix will realize a graph of a particular order.

\begin{lem}\cite{ellis2014minimal}\label{lem:mintilesprop3}
Let $P=\{t_1,\ldots,t_p\}$. If $\left\langle{r_1,\ldots,r_p}\right\rangle \in \mathcal{S}(P)$, and there exists an $n \in \ZG$ such that $nr_j \in \ZG$ for all $j$, then there is a graph of order $n$ such that $G \in \mathcal{O}(P)$ using $nr_j$ tiles of type $t_j$. Let $m_P$ denote the smallest order of a graph in $\mathcal{O}(P)$.

\end{lem}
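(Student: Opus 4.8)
The plan is to reverse-engineer the graph directly from the given spectrum vector by reading the balance equations as counting conditions on half-edges. First I would set $R_j = nr_j$ for each $j$; by hypothesis each $R_j \in \ZG$. Substituting the entries of $\langle r_1,\ldots,r_p\rangle$ into the defining relations of $\mathcal{S}(P)$ in Equation \ref{equ:sum} and multiplying through by $n$ recovers exactly Equation \ref{equ:Rj}, namely $\sum_j R_j = n$ and $\sum_j R_j z_{i,j} = 0$ for every bond-edge type $i$. Since $z_{i,j} = A_{i,j} - \hat{A}_{i,j}$, the second family of equations says precisely that $\sum_j R_j A_{i,j} = \sum_j R_j \hat{A}_{i,j}$; that is, if we take $R_j$ copies of tile $t_j$, then for each $i$ the total number of unhatted cohesive-ends of type $a_i$ equals the total number of hatted cohesive-ends of type $\hat{a}_i$.

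With the counting in hand, I would build $G$ explicitly. Form a vertex set consisting of $R_j$ vertices carrying the half-edge labels of $t_j$, for each $j$; this produces $\sum_j R_j = n$ vertices. For each bond-edge type $i$, the equality of the two counts above guarantees a bijection between the collection of all $a_i$-labeled half-edges and the collection of all $\hat{a}_i$-labeled half-edges appearing across these vertices. Choosing any such bijection, I would declare each matched pair to be a directed edge pointing from the vertex bearing the unhatted end to the vertex bearing the hatted end, following the convention in Definition \ref{defn:main}. Carrying this out independently for every $i$ consumes each half-edge exactly once.

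It then remains to verify that $G$ is realized by $P$. The tautological map sending each vertex to the tile type from which it was built supplies the required correspondence $f$: the first realization condition holds because each vertex was given exactly the half-edges of its tile, and the second holds because every edge was formed by matching an $a_i$ end with an $\hat{a}_i$ end. Hence $G \in \mathcal{O}(P)$, it has order $n$, and by construction it uses $nr_j = R_j$ tiles of type $t_j$, as claimed.

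I expect the only genuinely delicate point to be the matching step, and specifically the implicit claim that an arbitrary complete pairing of complementary half-edges yields an admissible object in the model. Two cases warrant comment: pairing two half-edges incident to the same vertex creates a loop, and pairing half-edges on an already-joined pair of vertices creates a multi-edge. The latter is harmless because $G$ is permitted to be a multigraph; the former is a genuine subtlety under the strict reading of an edge as a $2$-element subset of $V$, and simple examples (such as the single tile $\{a,\hat{a}\}$ with $n=1$) show a loop cannot always be avoided. Since the lemma asserts only the \emph{existence} of a realized graph and not its simplicity, the argument still goes through provided the half-edge formalism admits such self-pairings; the balance equations guarantee that at least one complete pairing exists, and that is all that realizability requires.
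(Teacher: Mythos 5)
Your proof is correct and is essentially the canonical argument: the paper itself does not reproduce a proof of this lemma (it is quoted from \cite{ellis2014minimal}), and the standard proof proceeds exactly as you do---set $R_j = nr_j$, observe that clearing denominators in Equation \ref{equ:sum} recovers Equation \ref{equ:Rj} so that for each $i$ the number of $a_i$ half-edges equals the number of $\hat{a}_i$ half-edges, and then pair complementary half-edges arbitrarily to obtain a complete complex of order $n$ realized with $R_j$ tiles of type $t_j$. Your closing caveat about loops is also well taken: the model does admit self-pairings in general (the paper acknowledges that tiles of the form $\{a^j,\hat{a}^k\}$ may form loops and only excludes them in Section \ref{sec:Results} by restricting to tiles with a single cohesive-end type), so the arbitrary-matching step needs no repair.
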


We will focus exclusively on pots with one bond-edge type,  meaning $M_P$ will be a $2 \times p$ matrix.  The following example demonstrates there may be more than one graph in the output of a pot $P$ with the same order.

\begin{ex}\label{ex:canonref}
Consider the pot $P=\{t_1=\{a^3\},,t_2=\{\hat{a}^3\}, t_3=\{\hat{a}\}\}$. The construction matrix is
\[
M_P = \begin{bmatrix}	3 & -3 & -1 & \aug & 0 \\
							1 & 1 & 1 & \aug & 1 \end{bmatrix}.
\]
To determine $\mathcal{S}(P)$, row-reduce $M_P$ to obtain
\[
\text{rref}(M_P) = \begin{bmatrix}	1 & 0 & \frac{1}{3} & \aug & \frac{1}{2} \\
							0 & 1 & \frac{2}{3} & \aug & \frac{1}{2} \end{bmatrix}.
\]
Thus we have
\[
\mathcal{S}(P) = \left\{\frac{1}{6k} \left\langle{3k-2z,3k-4z,6z}\right\rangle \mid k \in \Z^+, z \in \Q \cap \left[0,\frac{3k}{4}\right] \right\},
\]
and P realizes, for example, two nonisomorphic graphs of order 4.

\begin{figure}[H]
\begin{subfigure}{0.4\textwidth}
\begin{center}
\includegraphics[width=\textwidth]{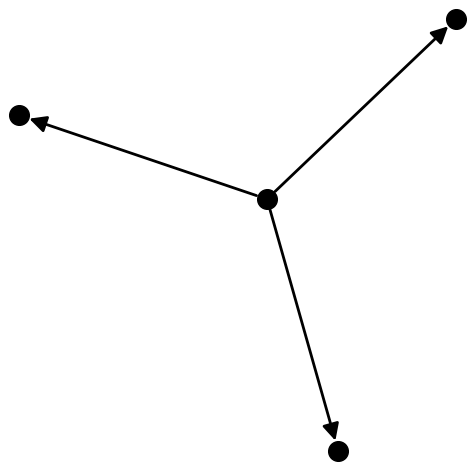}
\end{center}
\caption{Star graph of order 4 obtained from $z = \frac{3}{4}, k = 1$.}
\label{fig:example1a}
\end{subfigure}
\hfill
\begin{subfigure}{0.4\textwidth}
\begin{center}
\includegraphics[width=\textwidth]{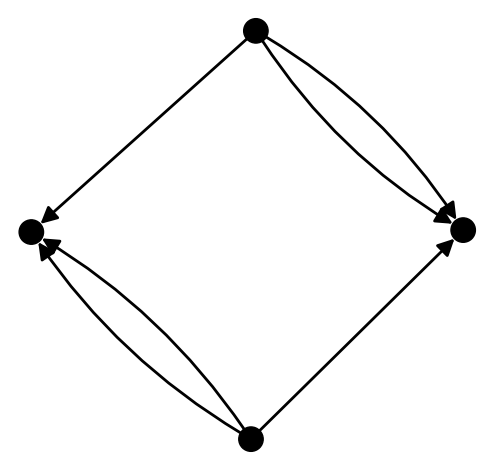}
\end{center}
\caption{Multigraph of order 4 obtained from $z = 0, k = 1$.}
\label{fig:example1b}
\end{subfigure}
\caption{A branched junction molecule and its associated tile representation.}
\label{fig:example1}
\end{figure}

\end{ex}

\section{Pots with a 1-Armed Tile} \label{sec:Results}

Since complementary cohesive-ends bond together, a tile type of the form $t = \{a^j, \hat{a}^k\}$ may form loops, leaving unmatched cohesive-ends either of the form $a^{j-k}$ or $\hat{a}^{k-j}$ depending on the magnitude of $j$ and $k$. For this paper, we only consider multigraphs without loops so we study tile types with only one cohesive-end type; that is, we restrict ourselves to the pot of tiles of the form $P_1 = \{t_1=\{a^{e_1}\},t_2 = \{\hat{a}^{e_2}\}, t_3 =\{\hat{a}\}, \ldots\}$ for some $e_1>0$ and $e_2>1$. The 1-armed tile $\{\hat{a}\}$ ensures that $P$ will always realize a complete complex. In all but Theorem \ref{thm:disc}, we assume $e_1 \geq e_2$.  Note that all of the results here can be stated identically for $P_1' = \{\{a^{e_1}\},\{\hat{a}^{e_2}\},\{\hat{a}\},\ldots\}$ where $e_1 < e_2$ by swapping the roles of $a$ and $\hat{a}$. The pot $P_1$ has corresponding construction matrix
\[
M_{P_1} = \begin{bmatrix} e_1 & -e_2 & -1 & \cdots & \aug & 0 \\ 1 & 1 & 1 & \cdots & \aug & 1 \end{bmatrix}.
\]

Unless otherwise specified, for the remainder of the paper we reserve the notation $P$ for the pot $P = \{t_1 = \{a^{e_1}\}, t_2 = \{\hat{a}^{e_2}\}, t_3 = \{\hat{a}\} \}$ because $\mathcal{O}(P) \subseteq \mathcal{O}(P_1)$. That is, any graph realized by $P$ will also be realized by the pot $P_1$. These simplifications are necessary since determining if a graph of order $n$ is realized by a pot is known to be NP-hard in general \cite{Johnson2021}.

The spectrum of $P$ is described in Lemma \ref{lem:SpectrumofP}.

\begin{lem}\label{lem:SpectrumofP}
Consider the pot $P$ with the associated construction matrix 
\begin{equation}\label{equ:conmat}
M_P = \begin{bmatrix} e_1 & -e_2 & -1 & \aug & 0 \\ 1 & 1 & 1 & \aug & 1 \end{bmatrix}.
\end{equation}
Then
\[
\mathcal{S}(P) = \left\{\frac{1}{k(e_1+e_2)}\left\langle{ke_2 - (e_2-1)z, ke_1 - (e_1+1)z, (e_1 + e_2)z}\right\rangle \mid k \in \ZG, z \in \mathbb{Q} \cap \left[0,\frac{ke_1}{e_1+1}\right]\right\}.
\]
\end{lem}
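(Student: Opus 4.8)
The statement asserts an equality of two subsets of $(\Q \cap [0,1])^3$, so the plan is to prove the two inclusions separately after first reducing the defining system to a one-parameter family. Since $\mathcal{S}(P)$ is by definition the set of $\langle r_1, r_2, r_3\rangle \in (\Q\cap[0,1])^3$ satisfying $M_P\langle r_1,r_2,r_3\rangle^T = \langle 0,1\rangle^T$, I would first row-reduce $M_P$ exactly as in Example \ref{ex:canonref}. Taking $r_3$ as the free variable, this yields
\[
r_1 = \frac{e_2 - (e_2-1)r_3}{e_1+e_2}, \qquad r_2 = \frac{e_1 - (e_1+1)r_3}{e_1+e_2},
\]
which already matches the claimed vector after the substitution $r_3 = z/k$ and clearing the common denominator; indeed this substitution is precisely what produces the factor $\tfrac{1}{k(e_1+e_2)}$ and the numerators $ke_2-(e_2-1)z$, $ke_1-(e_1+1)z$, and $(e_1+e_2)z$.

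The forward inclusion ($\supseteq$) is then a direct verification: for any $k \in \Z^+$ and $z \in \Q \cap [0, \tfrac{ke_1}{e_1+1}]$, I would substitute the claimed vector into both rows of $M_P$ and check that the top row evaluates to $0$ and the coordinates sum to $1$ (the $z$-terms cancel in each case), and then confirm that each coordinate lies in $[0,1]$. The reverse inclusion ($\subseteq$) follows from the row reduction: any solution has the displayed form with free parameter $r_3 \in \Q$, and choosing $k=1$, $z=r_3$ exhibits it as a member of the claimed set, provided $r_3$ lies in the stated interval.

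The crux of the argument — and the step I expect to require the most care — is pinning down the feasible interval for the free parameter. Imposing $r_1, r_2, r_3 \in [0,1]$ gives six inequalities, and I must show they collapse to exactly $0 \le r_3 \le \tfrac{e_1}{e_1+1}$ (equivalently $0 \le z \le \tfrac{ke_1}{e_1+1}$). The lower endpoint comes from $r_3 \ge 0$ and the upper endpoint from $r_2 \ge 0$; the work lies in checking that the other four inequalities are implied by these two. The key comparison is that $r_1 \ge 0$ amounts to $r_3 \le \tfrac{e_2}{e_2-1}$ (for $e_2 > 1$), and $\tfrac{e_1}{e_1+1} \le \tfrac{e_2}{e_2-1}$ reduces to $-e_1 \le e_2$, which always holds. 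The conditions $r_1 \le 1$ and $r_2 \le 1$ each reduce to an inequality of the form $(\text{nonnegative})\cdot r_3 \ge -(\text{positive})$, automatic for $r_3 \ge 0$, while $r_3 \le 1$ follows from $\tfrac{e_1}{e_1+1} < 1$. I would also treat the degenerate case $e_2 = 1$ (where the $r_1 \ge 0$ constraint becomes vacuous) separately, and remark that the standing hypothesis $e_1 \ge e_2$ is in fact not needed for this particular lemma.
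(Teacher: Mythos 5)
Your proposal is correct and takes essentially the same route as the paper's proof, which simply row-reduces $M_P$ to $\mathrm{rref}(M_P)$ and reads off the one-parameter family of solutions in the free variable $z$. In fact you are more thorough than the published argument: the paper stops after the row reduction, whereas your derivation of the interval $\left[0,\frac{ke_1}{e_1+1}\right]$ from the six constraints $0 \leq r_i \leq 1$ (including the comparison $\frac{e_1}{e_1+1} \leq \frac{e_2}{e_2-1}$), your explicit handling of the scaling parameter $k$ via $r_3 = z/k$, and your implicit correction of $k \in \ZG$ to $k \in \Z^+$ (since $k=0$ makes the denominator vanish) all supply details the paper leaves unstated.
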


\begin{proof}

Row-reduce $M_P$ to obtain
\begin{equation}\label{lem1equ1}
\text{rref}(M_P) = \begin{bmatrix} 1 & 0 & \frac{e_2-1}{e_1+e_2} & \aug & \frac{e_2}{e_1+e_2} \\
0 & 1 & \frac{e_1+1}{e_1+e_2} & \aug & \frac{e_1}{e_1+e_2} \end{bmatrix}.
\end{equation}

From Equation \ref{lem1equ1}, we have
\begin{align}
x &= \frac{1}{e_1+e_2}[e_2-(e_2-1)z], \label{lem1equ3} \\
y &= \frac{1}{e_1+e_2}[e_1-(e_1+1)z], \label{lem1equ4} \\
z &= \frac{1}{e_1+e_2}[(e_1+e_2)z]. \label{lem1equ5}
\end{align}

Equations \ref{lem1equ3}, \ref{lem1equ4} and \ref{lem1equ5} yield the desired result.
\end{proof}

We now turn our attention to the set of graphs in the output of $P$. 
The following examples demonstrate three types of graphs that are realized by $P$ for any $e_1$ and $e_2$.

\begin{ex}\label{ex:DivAlgGraph}
   The Division Algorithm guarantees there exist unique integers $q$ and $r$ such that $e_1 = e_2q + r$ where $0 \leq r < e_2$. The pot $P$ realizes a graph of order $q + r + 1$.

\begin{proof}
    Set $z = \frac{r}{q+r+1}$ and $k = 1$. We will use the substitution $e_1 = e_2q + r$ in strategic places in this proof. Then by Lemma \ref{lem:SpectrumofP} we have the particular solution

\begin{align*}
   & \frac{1}{e_1 + e_2}\left\langle e_2 - (e_2 - 1)\left(\frac{r}{q+r+1}\right), e_1 - (e_1+1)\left(\frac{r}{q+r+1}\right), (e_1+e_2)\left(\frac{r}{q+r+1}\right) \right\rangle \\
    &= \frac{1}{e_1 + e_2}\left\langle \frac{e_2q + e_2r + e_2 - e_2r + r}{q+r+1}, \frac{e_1q + e_1r +e_1 - e_1r - r}{q+r+1}, \frac{(e_1+e_2)r}{q+r+1} \right\rangle \\
    &= \frac{1}{e_1 + e_2}\left\langle \frac{e_2q + r + e_2}{q+r+1}, \frac{e_1q +e_1 - r}{q+r+1}, \frac{(e_1+e_2)r}{q+r+1} \right\rangle \\
    &= \frac{1}{e_1 + e_2}\left\langle \frac{e_1 + e_2}{q+r+1}, \frac{e_1q + e_2q + r - r}{q+r+1}, \frac{(e_1+e_2)r}{q+r+1} \right\rangle \\
    &= \left\langle \frac{1}{q + r + 1}, \frac{q}{q+r+1}, \frac{r}{q+r+1} \right\rangle.
\end{align*}

By Lemma \ref{lem:mintilesprop3}, the graph of order $q+r+1$ has tile distribution $(1, q, r)$.

\end{proof}


\end{ex}

\begin{ex}\label{ex:StarGraph}
The pot $P$ realizes a graph of order $1 + e_1$.

\begin{proof}
    Set $z = \frac{e_1}{1 + e_1}$ and $k = 1$. Then by Lemma \ref{lem:SpectrumofP}, we have the particular solution

    \begin{align*}
   & \frac{1}{e_1 + e_2}\left\langle e_2 - (e_2 - 1)\left(\frac{e_1}{1+e_1}\right), e_1 - (e_1+1)\left(\frac{e_1}{1+e_1}\right), (e_1+e_2)\left(\frac{e_1}{1+e_1}\right) \right\rangle \\
    &= \frac{1}{e_1 + e_2}\left\langle e_2 - \frac{e_1(e_2 - 1)}{1+e_1}, e_1 - e_1, \frac{e_1(e_1+e_2)}{1+e_1} \right\rangle \\
    &= \left\langle \frac{e_2(1+e_1) - e_1(e_2-1)}{(e_1+e_2)(1+e_1)}, 0, \frac{e_1}{1 + e_1} \right\rangle \\
    &= \left\langle \frac{e_2 + e_1e_2 - e_1e_2 + e_1}{(e_1 + e_2)(1+e_1)}, 0, \frac{e_1}{1+e_1} \right\rangle \\
    &= \left\langle \frac{1}{1+e_1}, 0, \frac{e_1}{1+e_1}\right\rangle.
\end{align*}

    By Lemma \ref{lem:mintilesprop3}, the graph of order $1 + e_1$ can be realized with tile distribution $(1, 0, e_1)$.
\end{proof}
\end{ex}

\begin{ex}\label{ex:lcmGraph}
The pot $P$ realizes a graph of order $e_1 + e_2$.

\begin{proof}
    Set $z = 0$ and $k = 1$. Then by Lemma \ref{lem:SpectrumofP}, we have the particular solution $\frac{1}{e_1 + e_2}\left\langle e_2, e_1, 0\right\rangle $ and by Lemma \ref{lem:mintilesprop3}, a graph of order $e_1 + e_2$ can be realized with tile distribution $(e_2, e_1, 0)$.
\end{proof}
    
\end{ex}

\begin{ex}\label{ex:threegraphsinO(P)}
Let $P=\{\{a^9\},\{\hat{a}^6\},\{\hat{a}\}\}$. Then, according to Examples \ref{ex:DivAlgGraph}, \ref{ex:StarGraph}, and \ref{ex:lcmGraph}, $P$ will realize graphs of order 5, 10, and 15, respectively. Examples of each of these graphs is provided in Figures \ref{fig:exgraphs-1} and \ref{fig:exgraphs-2}. 

\begin{figure}[H]
\begin{subfigure}{0.4\textwidth}
\begin{center}
\includegraphics[width=0.9\textwidth]{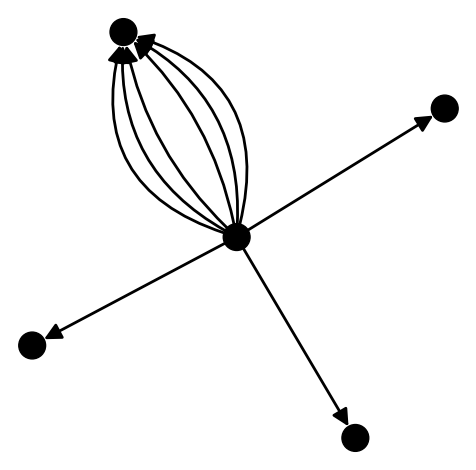}
\end{center}
\caption{Graph of order $q+r+1$}
\end{subfigure}
\hfill
\begin{subfigure}{0.4\textwidth}
\begin{center}
\includegraphics[width=0.9\textwidth]{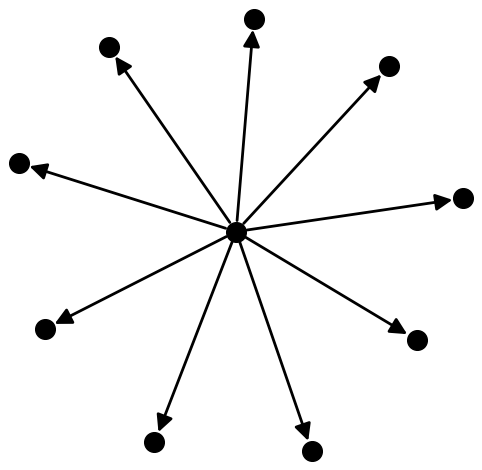}
\end{center}
\caption{Graph of order $1+e_1$}
\end{subfigure}
\caption{Two graphs for $P=\{\{a^9\},\{\hat{a}^6\},\{\hat{a}\}\}$}
\label{fig:exgraphs-1}
\end{figure}

\begin{figure}[H]
    \centering
    \includegraphics[width=0.4\textwidth]{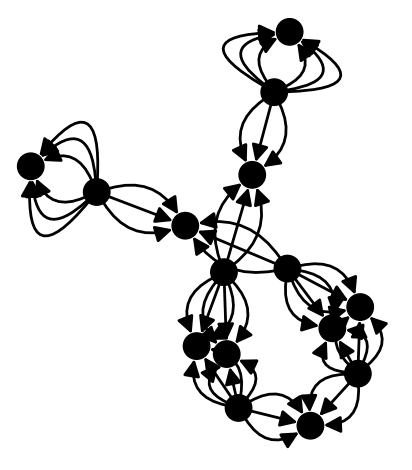}
    \caption{Graph of order $e_1+e_2$ for $P = \{\{a^9\},\{\hat{a}^6\},\{\hat{a}\}\}$}
    \label{fig:exgraphs-2}
\end{figure}
\end{ex}


\subsection{Connections Between $e_1$, $e_2$, and $\mathcal{O}(P)$}

In this section, we demonstrate how certain relationships between $e_1$ and $e_2$ will determine the orders of the graphs in $\mathcal{O}(P)$. In particular,
We show that if $G \in \mathcal{O}(P)$, then the order of $G$ is dependent upon $\text{gcd}(e_1+1,-e_2+1)$. The most straightforward case is presented in our first theorem which states the conditions in which $P$ will realize graphs of orders that are multiples of  $\text{gcd}(e_1+1,-e_2+1)$.

\begin{thm}\label{mainthm}
For the pot $P$, if $\text{gcd}(e_1+1,-e_2+1)=d \ne 1$, then $P$ realizes a graph of order $n$ if and only if $n=kd$ where $k \in \ZG$ and $n \geq m_P$.
\end{thm}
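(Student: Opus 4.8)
The plan is to recast realizability in terms of tile multiplicities and then analyze the resulting linear Diophantine system. By Lemma~\ref{lem:mintilesprop3} together with Equation~\ref{equ:Rj}, the pot $P$ realizes a graph of order $n$ exactly when there exist nonnegative integers $R_1, R_2, R_3$ with $R_1 + R_2 + R_3 = n$ and $e_1 R_1 = e_2 R_2 + R_3$. Eliminating $R_3 = n - R_1 - R_2$ from the balance equation yields the single relation
\[
n = (e_1 + 1)R_1 - (e_2 - 1)R_2,
\]
which is the identity I would build everything on.

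First I would dispatch the ``only if'' direction. Since $d = \gcd(e_1+1, -e_2+1) = \gcd(e_1 + 1, e_2 - 1)$ divides both coefficients on the right-hand side above, it divides $n$; hence every realizable order is a multiple of $d$. The inequality $n \geq m_P$ is immediate from the definition of $m_P$ as the smallest order in $\mathcal{O}(P)$, so this direction is routine.

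For the ``if'' direction the idea is to produce, for each multiple $n = kd \geq m_P$, an explicit admissible triple $(R_1, R_2, R_3)$. Writing $a = e_1 + 1$ and $b = e_2 - 1$ with $d = \gcd(a,b)$, the equation $a R_1 - b R_2 = n$ has (because $d \mid n$) a one-parameter family of integer solutions, and nonnegativity of $R_3$ is equivalent to the clean bound $R_2 \leq \frac{e_1 n}{e_1 + e_2}$, obtained by substituting $R_1 = \frac{n + b R_2}{a}$ back into $R_3 = e_1 R_1 - e_2 R_2 \geq 0$. Thus realizability of $n$ reduces to finding an integer $R_2$ lying both in a fixed residue class modulo $a/d$ and in the interval $\left[0, \frac{e_1 n}{e_1 + e_2}\right]$. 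I would exhibit the smallest nonnegative such $R_2$ and verify it meets the bound whenever $n \geq m_P$. A complementary and more conceptual route is to note that $\mathcal{O}(P)$ is closed under disjoint union, that Examples~\ref{ex:StarGraph} and~\ref{ex:lcmGraph} supply realizable orders $e_1 + 1$ and $e_1 + e_2$ with $\gcd(e_1+1, e_1+e_2) = d$, and that a numerical-semigroup (Chicken McNugget) argument then realizes all sufficiently large multiples of $d$.

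The main obstacle is precisely the ``no gaps'' phenomenon. The numerical-semigroup argument only delivers all multiples of $d$ beyond a Frobenius-type threshold, whereas the theorem asserts realizability for every multiple of $d$ at or above $m_P$, so the delicate range is the window between $m_P$ and that threshold. A tempting shortcut---inducting on $n$ by appending a fixed gadget of $d$ extra vertices---is not automatically valid, because a step that increases the count of $\hat{a}$-heavy tiles can drive $R_3 = e_1 R_1 - e_2 R_2$ negative when the current distribution is already near the minimal one. Consequently the heart of the argument is the careful bookkeeping showing that the smallest residue-class representative $R_2$ never exceeds $\frac{e_1 n}{e_1 + e_2}$ once $n \geq m_P$. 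This is the step I would spend the most care on, since it is exactly where the interplay between $e_1$ and $e_2$ must be used, and where the claim would stand or fall.
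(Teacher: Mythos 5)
Your setup and your ``only if'' half are correct and coincide with the paper's own proof of Theorem~\ref{mainthm}: the paper likewise adds the two equations of the system to obtain $(e_1+1)x+(-e_2+1)y=n$ (its Equation~\ref{dioequ}) and reads off divisibility by $d$. But your ``if'' direction is a plan rather than a proof, and the step you yourself singled out---verifying that the smallest nonnegative $R_2$ in the residue class modulo $(e_1+1)/d$ satisfies $R_2 \leq \frac{e_1 n}{e_1+e_2}$ whenever $n \geq m_P$---is exactly where it collapses, because that claim is false. Take $e_1 = 9$, $e_2 = 7$, so $d = \gcd(10,-6) = 2$ and $m_P = 4$, realized by the distribution $(1,1,2)$ (Example~\ref{ex:DivAlgGraph} with $q=1$, $r=2$). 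For $n = 6$, the integer solutions of $10R_1 - 6R_2 = 6$ are $(R_1,R_2) = (3+3t,\, 4+5t)$, so the least admissible $R_2$ is $4 > \frac{e_1 n}{e_1+e_2} = \frac{27}{8}$, forcing $R_3 = e_1R_1 - e_2R_2 < 0$; a direct check of all $R_1 \leq 6$ confirms no nonnegative distribution exists. Thus $n=6$ is a multiple of $d$ with $n \geq m_P$ that $P$ does not realize, so no bookkeeping can complete your outlined argument with $m_P$ as the threshold.

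For honest comparison: the paper's proof does not contain the missing step either---it invokes the criterion ``an integer solution exists iff $d \mid n$'' for the linear Diophantine equation and stops, silently ignoring the constraints $x, y \geq 0$ and $z = e_1x - e_2y \geq 0$, and never using the hypothesis $n \geq m_P$. Your instincts were better than the paper's proof: the gadget-induction pitfall you flagged is real, and the nonnegativity bound is indeed ``where the claim stands or falls''---the example above shows it falls. What survives, and what your proposal genuinely establishes, is (i) every realizable order is a multiple of $d$ (your ``only if'' argument), and (ii) by your semigroup observation (orders $e_1+1$ and $e_1+e_2$ are realizable by Examples~\ref{ex:StarGraph} and~\ref{ex:lcmGraph}, their gcd is $d$, and outputs are closed under disjoint union by Theorem~\ref{thm:disc}), all sufficiently large multiples of $d$ are realizable; the correct threshold is thus a Frobenius-type bound in the spirit of $\zeta$ and Theorem~\ref{thm:lwrbnd}, not $m_P$. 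The special case $e_1 = e_2$ odd (Corollary~\ref{cor:e1_odd}) does hold, since $(R_1,R_2,R_3) = (m,m,0)$ realizes every $n = 2m$. So your write-up is more careful than the paper's, but as a proof of the theorem as stated it has a genuine---and in fact unfixable---gap at precisely the step you left open.
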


\begin{proof}


 From Equation \ref{equ:conmat},  we have the system of equations \begin{equation}\label{equ:system}\left\{
\begin{array}{l}
e_1x-e_2y-z=0, \\
x+y+z=n,
\end{array}
\right.\end{equation}
where $x,y,z \in \ZG$.


Adding these equations, we obtain
\begin{equation}\label{dioequ}
(e_1+1)x+(-e_2+1)y=n.
\end{equation}
This is a linear Diophantine equation in two variables and since $\text{gcd}(e_1+1,-e_2+1)=d \ne 1$, a solution to this equation exists if and only if $n=kd$, which establishes the desired result.
\end{proof}


\begin{cor}\label{cor:e1_odd}
    Let $P$ be a pot where $e_1$ is odd and $e_1 = e_2$. Then $P$ realizes a graph for all orders $n$ where $n \in 2\ZG$.
\end{cor}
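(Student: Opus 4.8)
The plan is to invoke Theorem~\ref{mainthm} directly, so that the whole argument reduces to two computations: identifying the value of $d=\gcd(e_1+1,-e_2+1)$ in this special case, and pinning down $m_P$ so that \emph{every} positive even order is covered, not merely those above some unspecified threshold.

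First I would compute $d$. Substituting $e_2=e_1$ and using that the gcd is unaffected by the sign of an argument gives
\[
d=\gcd(e_1+1,-e_1+1)=\gcd(e_1+1,e_1-1).
\]
Since $e_1$ is odd, both $e_1+1$ and $e_1-1$ are even, so $2\mid d$; conversely $\gcd(e_1+1,e_1-1)$ divides the difference $(e_1+1)-(e_1-1)=2$, so $d\mid 2$. Hence $d=2\ne 1$, and Theorem~\ref{mainthm} applies to conclude that $P$ realizes a graph of order $n$ if and only if $n=2k$ for some $k\in\ZG$ with $n\ge m_P$.

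It remains to show $m_P=2$, which is exactly what upgrades the conclusion of Theorem~\ref{mainthm} from ``even orders $n\ge m_P$'' to ``all positive even orders.'' For this I would appeal to Example~\ref{ex:DivAlgGraph}: writing $e_1=e_2q+r$ with $e_1=e_2$ forces $q=1$ and $r=0$, so $P$ realizes a graph of order $q+r+1=2$ with tile distribution $(1,1,0)$. Concretely this is the two-vertex multigraph whose $e_1$ edges join a single copy of $t_1=\{a^{e_1}\}$ to a single copy of $t_2=\{\hat a^{e_2}\}$, a legitimate loopless complete complex. As no graph of order $1$ can be a loopless complete complex, $2$ is the smallest attainable order, so $m_P=2$, and combining with the theorem yields a realized graph for every $n\in 2\ZG$.

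The gcd computation is routine; the substantive content, and hence the main obstacle, is the second step. Theorem~\ref{mainthm} alone guarantees realizability only for even $n\ge m_P$, so the real claim of the corollary is that the minimum realized order is exactly the smallest admissible even value. The argument therefore hinges on verifying, via Example~\ref{ex:DivAlgGraph} together with Lemma~\ref{lem:mintilesprop3}, that this minimum of $2$ is actually attained.
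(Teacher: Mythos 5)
Your proof is correct and takes essentially the same route as the paper: compute $d=\gcd(e_1+1,-e_1+1)=2$ using that $e_1$ is odd, then invoke Theorem~\ref{mainthm}. In fact yours is slightly more complete than the paper's own proof, which stops at the gcd computation: you additionally verify $m_P=2$ via Example~\ref{ex:DivAlgGraph} (the $q=1$, $r=0$ case giving tile distribution $(1,1,0)$, i.e.\ the two-vertex multigraph with $e_1$ parallel edges), which is precisely the step needed to upgrade Theorem~\ref{mainthm}'s conclusion from ``even orders $n\ge m_P$'' to all positive even orders as the corollary asserts.
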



\begin{proof}

From Theorem \ref{mainthm}, it is sufficient to notice that since $e_1$ is odd and $e_1 = e_2$, $\text{gcd}(e_1 + 1, -e_1 + 1) = 2$. Hence all graphs realized by $P$ must have order $2k$ for $k \in \ZG$.


\end{proof}

\begin{ex}
Let $P = \{\{a^3\},\{\hat{a}^3\},\{\hat{a}\}\}$. Then by Corollary \ref{cor:e1_odd}, $P$ realizes a graph of every even order. Two distinct connected graphs are provided below: the graph of minimal order 2, and a graph of order 6.

\begin{figure}[H]
\begin{subfigure}{0.4\textwidth}
\begin{center}
\includegraphics[width=0.15\textwidth]{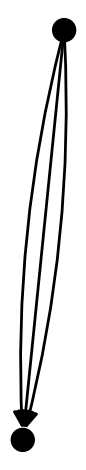}
\end{center}
\caption{Graph of order 2}
\end{subfigure}
\begin{subfigure}{0.5\textwidth}
\begin{center}
\includegraphics[width=0.75\textwidth]{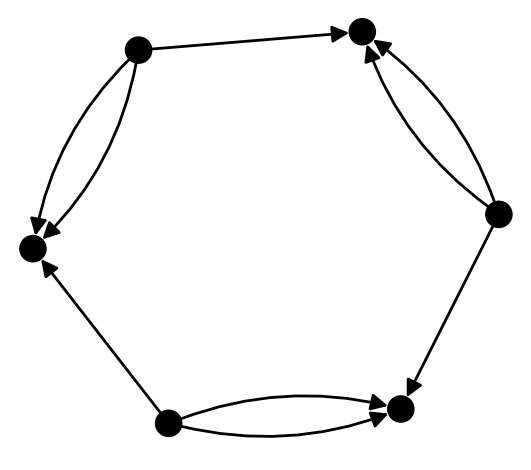}
\end{center}
\caption{Graph of order 6}
\end{subfigure}
\caption{Two Graphs for $P=\{\{a^3\},\{\hat{a}^3\},\{\hat{a}\}\}$}
\end{figure}
\end{ex}




The case when $e_1 = e_2$ is even is not as immediate since $gcd(e_1 + 1, -e_1 + 1) = 1$. We provide a motivating example in which $gcd(e_1 + 1, -e_2 + 1) = 1$.

\begin{ex}\label{canonex}
Consider the pot $P=\{\{a^6\},\{\hat{a}^4\},\{\hat{a}\}\}$. The associated construction matrix is
\[
M_P=\begin{bmatrix} 6 & -4 & -1 & \aug & 0 \\ 1 & 1 & 1 & \aug & 1 \end{bmatrix}
\]
and $\mathcal{S}(P) = \{\frac{1}{10k} \langle 4k - 3z, 6k - 7z, 10z\rangle \mid k \in \ZG, z \in \mathbb{Q} \cap [0, \frac{6k}{7}] \}$. When $k = 1$ and $z = \frac{1}{2}$ we obtain a graph of order 4 with tile distribution $(1,1,2)$, which is a graph of minimal order in $\mathcal{O}(P)$ (see Figure \ref{fig:614_min}). Notice that from Equation \ref{dioequ}, there is an associated linear Diophantine equation $7x - 3y = n$. Software can be used to verify that the pot $P$ does not realize a graph of every order; i.e. the results of Theorem \ref{mainthm} do not generalize when $gcd(e_1 + 1, -e_2 + 1) = 1$.




\begin{figure}[H]
\begin{center}
\includegraphics[width=0.3\textwidth]{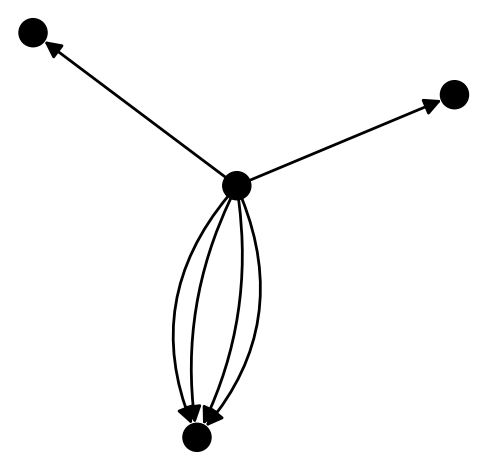}
\end{center}
\caption{Graph of order 4 for $\{\{a^6\},\{\hat{a}^4\},\{\hat{a}\}\}$}\label{fig:614_min}
\end{figure}


\end{ex}


There are some important observations from Example \ref{canonex}. It appears $P$ realizes a graph for all orders $n$, except when $n = 1, 2, 3, 6$. Notice the smallest order graph realized by $P$ is order 4, but $P$ does not realize a graph of order 5. Theorem \ref{thm:lwrbnd} generalizes the idea that there is some lower bound after which $P$ will realize a graph of any order and this lower bound may not be the order of the smallest graph realized by $P$.

\begin{defn}\label{zeta}
Let  $S_P = \{n \mid n+k \text{ is the order of some }  G \in \mathcal{O}(P) \text{ for every } k \in \N\}$. The \textit{lower density bound} of $P$ is $\zeta = \text{min}(S_P)$.
\end{defn}


That is, the lower density bound $\zeta$ is the smallest order for which $P$ realizes a graph of every order larger than $\zeta$.

In general, it is difficult to predict $\zeta$ for $P$. However, Theorem \ref{thm:lwrbnd} provides a lower bound that is close to $\zeta$. 

\begin{thm}\label{thm:lwrbnd}
Consider the pot $P$ where $\text{gcd}(e_1+1,-e_2+1)=1$. Then $P$ realizes a graph for every order $n$ with $n \geq \text{max}\left\{\frac{(e_1+1)(e_1+e_2)}{e_1},\frac{(e_2-1)(e_1+e_2)}{e_2}\right\}$.
\end{thm}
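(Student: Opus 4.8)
The plan is to translate the realization question into a purely number-theoretic one and then solve it one residue class at a time. By Lemma \ref{lem:mintilesprop3} it suffices to produce, for every order $n$ above the claimed bound, a triple of nonnegative integer tile counts $(x,y,z)\in\ZG^3$ satisfying the two equations in \eqref{equ:system}: such a triple gives the point $\langle x/n,y/n,z/n\rangle\in\mathcal{S}(P)$ together with $n\langle x/n,y/n,z/n\rangle\in\ZG^3$, and hence a graph of order $n$ in $\mathcal{O}(P)$. Adding the two equations exactly as in \eqref{dioequ}, these triples are precisely the nonnegative integer solutions of $(e_1+1)x-(e_2-1)y=n$ for which the determined value $z=e_1x-e_2y$ is also nonnegative. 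So the first thing I would record is that, of the three nonnegativity conditions, only $z\ge 0$ is genuinely restrictive; in the construction below $x\ge 0$ and $y\ge 0$ will be automatic.

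Next I would exploit the hypothesis $\gcd(e_1+1,-e_2+1)=\gcd(e_1+1,e_2-1)=1$. Since $e_2-1$ is invertible modulo $e_1+1$, for each $n$ there is a unique $y^\ast\in\{0,1,\dots,e_1\}$ with $(e_2-1)y^\ast\equiv -n\pmod{e_1+1}$. Setting $x^\ast=\dfrac{n+(e_2-1)y^\ast}{e_1+1}$, the numerator is divisible by $e_1+1$ by the choice of $y^\ast$, so $x^\ast\in\ZG$, and $x^\ast\ge 0$ because the numerator is nonnegative. By construction $(x^\ast,y^\ast)$ solves \eqref{dioequ} with $x^\ast,y^\ast\ge 0$, so two of the three constraints and the defining equation are already satisfied. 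The crux is then the single inequality $z=e_1x^\ast-e_2y^\ast\ge 0$, which is the cone constraint distinguishing this problem from an ordinary two-generator Frobenius problem. Substituting $x^\ast$ and clearing $e_1+1$, I expect $z\ge 0$ to simplify to $n\ge \dfrac{(e_1+e_2)\,y^\ast}{e_1}$, the key collapse being the identity $(e_1+1)e_2-e_1(e_2-1)=e_1+e_2$.

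The main obstacle, and the only place real care is needed, is turning this per-residue inequality into a uniform lower bound on $n$. Because $y^\ast\le e_1$ always, the right-hand side $\frac{(e_1+e_2)y^\ast}{e_1}$ never exceeds $e_1+e_2$, so every $n\ge e_1+e_2$ admits a valid triple $(x^\ast,y^\ast,z)$; the extremal residue forcing $y^\ast=e_1$ is exactly the distribution $(e_2,e_1,0)$ of Example \ref{ex:lcmGraph}, where $z=0$. To finish, I would observe that the threshold in the statement dominates this clean bound: a short computation gives $\max\!\left\{\tfrac{(e_1+1)(e_1+e_2)}{e_1},\tfrac{(e_2-1)(e_1+e_2)}{e_2}\right\}=\tfrac{(e_1+1)(e_1+e_2)}{e_1}\ge e_1+e_2$, so any $n$ meeting the hypothesis certainly satisfies $n\ge e_1+e_2$, whence Lemma \ref{lem:mintilesprop3} yields a graph of order $n$ in $\mathcal{O}(P)$ and the theorem follows. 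The two symmetric terms in the maximum correspond to running the same residue argument modulo $e_2-1$ rather than $e_1+1$, i.e.\ to the mirror pot $P_1'$; retaining both keeps the bound valid in either orientation, even though for $P$ itself the first term is the operative one.
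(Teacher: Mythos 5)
Your proof is correct, and it takes a genuinely different route from the paper's. The paper argues geometrically: from the constraint $z \geq 0$ it derives the bounds $0 \leq x \leq \frac{e_2n}{e_1+e_2}$ and $0 \leq f(x) \leq \frac{e_1n}{e_1+e_2}$ on the feasible segment of the line defined by Equation \ref{dioequ}, then uses the fact that consecutive integer points on that line are spaced $(e_2-1,\,e_1+1)$ apart to conclude an integer point lies in the feasible region as soon as $e_2-1 \leq \frac{e_2n}{e_1+e_2}$ and $e_1+1 \leq \frac{e_1n}{e_1+e_2}$; the stated threshold is exactly where those two inequalities come from. You instead work one residue class at a time: inverting $e_2-1$ modulo $e_1+1$ (legitimate by the gcd hypothesis), taking the least representative $y^\ast \in \{0,\ldots,e_1\}$, and reducing the single binding constraint $z \geq 0$ to $n \geq \frac{(e_1+e_2)y^\ast}{e_1}$. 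Your algebra checks out, including the collapse $(e_1+1)e_2 - e_1(e_2-1) = e_1+e_2$, and $x^\ast, y^\ast \geq 0$ are indeed automatic in your construction. The payoff is a strictly sharper uniform bound: every $n \geq e_1+e_2$ is realizable, whereas the paper's threshold equals $\frac{(e_1+1)(e_1+e_2)}{e_1} > e_1+e_2$, so the theorem follows a fortiori exactly as you say. Your method in fact yields an exact per-residue criterion --- $n$ is realizable if and only if $n \geq \frac{(e_1+e_2)\,y^\ast(n)}{e_1}$, since any other admissible $y$ in the class exceeds $y^\ast$ by a multiple of $e_1+1$ and only tightens the constraint --- which bears directly on the paper's open question about $\zeta$: it shows $\zeta \leq e_1+e_2$, improving on $\eta$, and in principle determines $\zeta$ outright. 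One cosmetic quibble: your closing remark that the two terms of the maximum correspond to running the argument modulo $e_1+1$ versus modulo $e_2-1$ is an interpretation rather than how the paper obtains them (both terms fall out of its single box-fitting argument), but since the first term dominates, nothing in your proof depends on it.
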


\begin{proof}

From Equation \ref{dioequ}, $P$ realizes a graph $G$ of order $n$ if and only if $(e_1+1)x + (-e_2+1)y = n$ for some $x$ and $y$. Solving for $y$ and defining $y=f(x)$ gives the function
\[
f(x)=\frac{n-(e_1+1)x}{-e_2+1}.
\]

Since we only consider nonnegative solutions $(x,f(x))$ to Equation \ref{dioequ} (i.e. $x \geq 0$ and $y=f(x) \geq 0$), then we will find upper bounds for $x$ and $f(x)$ to guarantee that a solution exists.
The key observation is to notice $z=n-(x+f(x))$ from Equation \ref{equ:system}. Thus, substituting for $f(x)$ we have 
\[
z = \frac{-e_2n+(e_1+e_2)x}{-e_2+1}.
\]

To find the upper bound on $x$ and $f(x)$, we determine the value for which $z=0$.
When $z \geq 0$, we have $x \leq \frac{e_2n}{e_1+e_2}$. This provides the bounds

\[\left\{
\begin{array}{l}
0 \leq x \leq \frac{e_2n}{e_1+e_2}, \\
0 \leq f(x) \leq f\left(\frac{e_2n}{e_1+e_2}\right) = \frac{e_1n}{e_1+e_2}.
\end{array}
\right.\]

The slope of $f(x)$ is $\frac{e_1+1}{e_2-1}$. Let $(x_1,f(x_1)) = \text{min} \{(x, f(x)) \in \Z^2 \mid x \geq e_2 - 1 \text{ and } f(x) \geq e_1 + 1\} $. The slope of $f(x)$ guarantees
\[\left\{\begin{array}{l}
0 \leq x_1-(e_2-1) < e_2-1, \\
0 \leq f(x_1)-(e_1+1) < e_1+1,
\end{array}\right.\]
with $(x_1-(e_2-1),f(x_1)-(e_1+1)) \in \Z^2$. Thus an integer point is guaranteed if the inequalities
\[\left\{\begin{array}{l}
e_2-1 \leq \frac{e_2n}{e_1+e_2}, \\
e_1+1 \leq \frac{e_1n}{e_1+e_2},
\end{array}\right.\]
are both satisfied (see Figure \ref{fig:boundsrect}). Thus, by solving both inequalities for $n$, we conclude that $P$ realizes a graph for every $n$ with $n \geq \text{max}\left\{\frac{(e_1+1)(e_1+e_2)}{e_1},\frac{(e_2-1)(e_1+e_2)}{e_2}\right\}$.


\end{proof}

\begin{figure}[H]
\begin{center}
\includegraphics[scale=0.5]{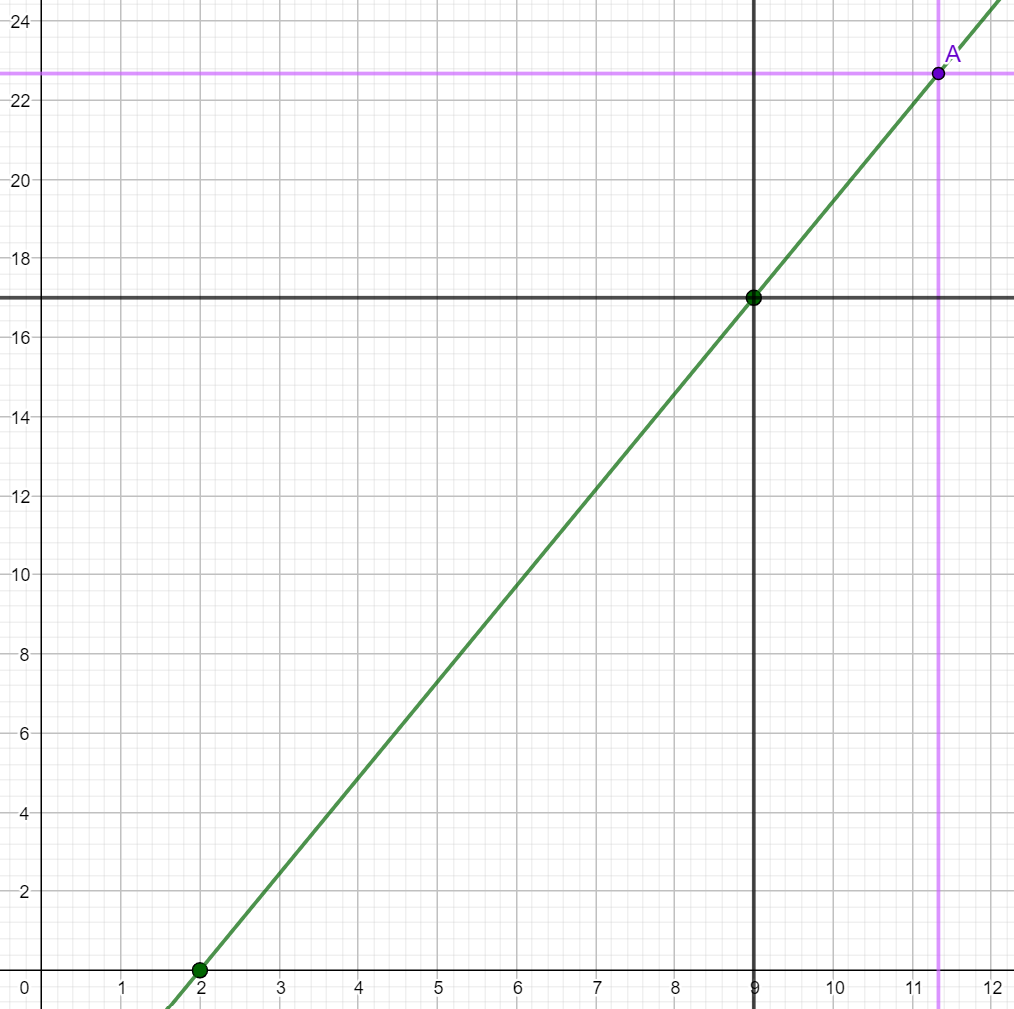}
\end{center}
\caption{The purple rightmost vertical line and upper horizontal line represent the bounds on $x$ and $f(x)$, respectively, while the black lines represent $y=e_1+1$ and $x=e_2-1$. The green line with positive slope is the function $f(x)$. }
\label{fig:boundsrect}
\end{figure}

\begin{rem}
We denote the lower bound derived in Theorem \ref{thm:lwrbnd} by $\eta$. That is, $$\eta=\text{max}\left\{\frac{(e_1+1)(e_1+e_2)}{e_1},\frac{(e_2-1)(e_1+e_2)}{e_2}\right\}.$$
\end{rem}

Despite the fact that $\zeta \leq \eta$, there are only finitely many orders to check for a pot $P$ to determine the value of $\zeta$. That is, one need only check all orders for $m_P \leq n \leq \eta$, which can be done using software.

\subsection{Connected and Disconnected Graphs}

Knowing orders of the graphs that can be realized by $P$ allows us to address the next research question related to the types of graphs realized by $P$. The following theorem demonstrates when an arbitrary pot realizes a disconnected graph. Notice that the theorem below is applicable to any pot of tiles with any number of bond-edge types.

\begin{thm}\label{thm:disc}
    The pot $P_j = \{t_1, t_2, \ldots, t_j\}$ realizes a disconnected graph $G$ of order $n$ if and only if for at least one tile distribution $(R_{1},\ldots,R_{j})$ associated to $G$, 
\[
(R_{1},\ldots,R_{j}) = \sum_{i=1}^m (R_{1i},\ldots,R_{ji}),
\]
where each $j$-tuple $(R_{1i},\ldots,R_{ji})$ is a tile distribution of $P_j$ that realizes a graph of order less than $n$.
\end{thm}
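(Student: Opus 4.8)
The plan is to prove both implications using two elementary facts about realizations: first, that the order of any realized graph equals the total number of tiles used, i.e. $n = \sum_{k} R_k$ where $(R_1,\ldots,R_j)$ is its tile distribution (this is exactly the first equation in Equation \ref{equ:Rj}); and second, that disjoint unions of realized graphs are realized while connected components of a realized graph are themselves realized. The forward direction will exploit connected components, and the reverse direction will exploit disjoint unions. Throughout, additivity of tile distributions over a partition of the vertex set is the organizing principle.

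For the forward direction, suppose $P_j$ realizes a disconnected graph $G$ of order $n$ via some assembly design, and let $G_1,\ldots,G_m$ be its connected components, so $m \geq 2$. I would first observe that the given realization of $G$ restricts to a realization of each $G_i$ by $P_j$: since $G$ has no edges between distinct components, every cohesive-end matching guaranteed by the realization lies entirely within a single $G_i$, so the induced labeling on $V(G_i)$ inherits both the tile/half-edge correspondence and the complementary-edge condition. Letting $(R_{1i},\ldots,R_{ji})$ be the tile distribution of $G_i$ and $n_i$ its order, the components partition $V(G)$, so the tile distributions add: $(R_1,\ldots,R_j) = \sum_{i=1}^m (R_{1i},\ldots,R_{ji})$. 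Because $m \geq 2$ and each component is nonempty, $n_i < n$ for every $i$, which is precisely the claimed decomposition.

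For the reverse direction, suppose $(R_1,\ldots,R_j)$ is a tile distribution associated to a graph of order $n$ realized by $P_j$, and that $(R_1,\ldots,R_j) = \sum_{i=1}^m (R_{1i},\ldots,R_{ji})$ with each $(R_{1i},\ldots,R_{ji})$ realizing a graph $H_i$ of order $n_i < n$. Summing entries of each tuple and using $n_i = \sum_k R_{ki}$ gives $n = \sum_{i=1}^m n_i$; since each $n_i < n$ and each $n_i \geq 1$, this forces $m \geq 2$. I would then form the disjoint union $H = H_1 \sqcup \cdots \sqcup H_m$, which is realized by $P_j$ by combining the individual labelings, has order $\sum_i n_i = n$, and is disconnected because $m \geq 2$ with every $H_i$ nonempty. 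Its tile distribution is $\sum_i (R_{1i},\ldots,R_{ji}) = (R_1,\ldots,R_j)$, so $P_j$ realizes a disconnected graph of order $n$, completing the equivalence.

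The main obstacle is conceptual rather than computational: I expect the crux to be the careful justification that a realization of $G$ genuinely restricts to a realization of each connected component (that no bonded arm crosses between components, so the one-to-one correspondence of cohesive ends and the complementary-edge condition are inherited), together with the observation that the order–tile-count identity automatically guarantees $m \geq 2$, which is what makes the constructed disjoint union truly disconnected. Everything else reduces to the additivity of tile distributions over the vertex partition induced by the components.
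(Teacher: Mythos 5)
Your proof is correct and takes essentially the same approach as the paper's: decompose the disconnected graph into vertex-disjoint pieces (the paper writes $G = \bigcup_{i=1}^m H_i$; you use connected components) and add tile distributions over the vertex partition, then take a disjoint union for the converse. If anything, you are more careful than the paper, since you explicitly justify that the realization restricts to each component and that $n = \sum_{i=1}^m n_i$ with each $n_i < n$ forces $m \geq 2$, details the published proof leaves implicit.
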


\begin{proof}
    Suppose $P_j$ realizes a disconnected graph $G$ of order $n$ and let $(R_{1},\ldots,R_{j})$ be the tile distribution which realizes $G$. Then the graph $G$ is a union of disjoint subgraphs; that is,
\begin{equation}\label{eqn:tileuni}
G = \bigcup_{i=1}^m H_i
\end{equation}
where $V(H_i) \cap V(H_j) = \varnothing$ for $i \ne j$. Since each $H_i$ is a graph, and hence a complete complex, there must be some tile distribution, namely $(R_{1i},\ldots,R_{ji})$, that realizes $H_i$. Further, since each $H_i \subset G$, it follows that $R_{ki} \leq R_{k}$ for all $1 \leq k \leq j$. Thus we translate Equation \ref{eqn:tileuni} into the language of tile distributions to arrive at 
\[
(R_{1},\ldots,R_{j}) = \sum_{i=1}^m(R_{1i},\ldots,R_{ji}).
\]

Conversely, suppose $P_j$ realizes graphs $H_i$ with corresponding tile distributions $(R_{1i},\ldots,R_{ji})$ such that
\[
(R_{1},\ldots,R_{j}) = \sum_{i=1}^m (R_{1i},\ldots,R_{ji}) \text{ and } R_1 + R_2 + \cdots + R_j = n,
\]
then it follows immediately that $P_j$ realizes a disconnected graph of order $n$ with tile distribution $(R_{1},\ldots,R_{j})$.
\end{proof}

Theorem \ref{thm:disc} provides the conditions under which the pot $P = \{\{a^{e_1}\}, \{\hat{a}^{e_2}\}, \{\hat{a}\}\}$ will realize disconnected graphs as shown by the following two corollaries. 

\begin{cor}
Let $\text{gcd}(e_1+1,-e_2+1)=d \ne 1$. The pot $P$ realizes a disconnected graph of order $n$ if and only if
\[
n = n_1+n_2+\cdots+n_{\ell}
\]
where $n_i = k_id$ for some $k_i \in \ZG$ and $n_i \geq m_P$.
\end{cor}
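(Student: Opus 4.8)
The plan is to obtain this corollary as a direct specialization of Theorem \ref{thm:disc} to the three-tile pot $P$, using Theorem \ref{mainthm} to pin down exactly which orders the individual pieces may take. Throughout I read the decomposition as having $\ell \geq 2$, since a disconnected graph has at least two connected components and since $\ell \geq 2$ is what the converse needs.

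For the forward direction, suppose $P$ realizes a disconnected graph $G$ of order $n$. I would first write $G = \bigsqcup_{i=1}^{\ell} H_i$ as the disjoint union of its connected components, with $\ell \geq 2$. Because no edge of $G$ can join vertices in distinct components, every cohesive-end of a vertex in $H_i$ is matched to another within $H_i$, so each $H_i$ is itself a complete complex and hence a graph in $\mathcal{O}(P)$. Setting $n_i = |V(H_i)|$ gives $\sum_{i=1}^{\ell} n_i = n$. Applying Theorem \ref{mainthm} to each $H_i$ (legitimate since $\text{gcd}(e_1+1,-e_2+1) = d \ne 1$) forces $n_i = k_i d$ for some $k_i \in \ZG$, and since $m_P$ is by definition the minimal order of a graph in $\mathcal{O}(P)$, we automatically have $n_i \geq m_P$. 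This is exactly the claimed decomposition. Equivalently, one may invoke Theorem \ref{thm:disc} directly to obtain a tile-distribution decomposition $(R_1,R_2,R_3) = \sum_i (R_{1i},R_{2i},R_{3i})$ and read off $n_i = R_{1i}+R_{2i}+R_{3i}$ as the order of the $i$-th piece.

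For the converse, suppose $n = n_1 + \cdots + n_{\ell}$ with $\ell \geq 2$ and each $n_i = k_i d \geq m_P$. For each $i$ the pair of conditions $n_i = k_i d$ and $n_i \geq m_P$ is precisely the hypothesis of Theorem \ref{mainthm}, so $P$ realizes a graph $H_i \in \mathcal{O}(P)$ of order $n_i$. Taking the disjoint union $G = \bigsqcup_{i=1}^{\ell} H_i$ yields a graph of order $n$ whose tile distribution is the sum of those of the $H_i$; by the converse direction of Theorem \ref{thm:disc} this $G$ is realized by $P$, and because $\ell \geq 2$ it is disconnected.

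The step requiring the most care is the forward direction's claim that each connected component is \emph{independently} realized by $P$, i.e., that the matching of cohesive-ends respects the component decomposition. This is what lets me apply Theorem \ref{mainthm} componentwise and thereby convert the purely combinatorial ``disjoint union'' condition of Theorem \ref{thm:disc} into the arithmetic condition $n_i = k_i d$. Everything else is bookkeeping: summing the orders to recover $n$, and observing $n_i \geq m_P$ directly from the definition of $m_P$. The one hypothesis I must keep visible is $\ell \geq 2$, which is needed in the converse to guarantee that the assembled graph is genuinely disconnected.
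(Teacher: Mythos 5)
Your proposal is correct and follows exactly the route the paper intends: the paper's own proof is the single line ``immediate from Theorem \ref{mainthm} and Theorem \ref{thm:disc},'' and your argument simply spells out that specialization, including the worthwhile observations that each connected component is independently a complete complex in $\mathcal{O}(P)$ and that the decomposition should be read with $\ell \geq 2$.
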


\begin{proof}

The proof is immediate from Theorem \ref{mainthm} and Theorem \ref{thm:disc}.
\end{proof}



\begin{cor}\label{cor:DisconnectedGCD1}
    For the pot $P$, if $\text{gcd}(e_1+1,-e_2+1)=1$ and $n = n_1 + n_2 + \cdots + n_{\ell}$ where   each $n_i \geq \zeta$, then $P$ realizes a disconnected graph of order $n$.
\end{cor}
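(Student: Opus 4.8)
The plan is to obtain this corollary by combining the converse direction of Theorem \ref{thm:disc} with the defining property of the lower density bound $\zeta$ from Definition \ref{zeta}. The guiding idea is that every summand $n_i$ is large enough to be the order of a graph that $P$ realizes on its own, so the disjoint union of these graphs is a disconnected graph of order $n = \sum_i n_i$ realized by $P$.

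First I would invoke Definition \ref{zeta}: since each $n_i \geq \zeta$, the pot $P$ realizes some graph $H_i$ of order $n_i$. Fix a tile distribution $(R_{1i}, R_{2i}, R_{3i})$ realizing $H_i$; by the order equation in Equation \ref{equ:Rj} it satisfies $R_{1i} + R_{2i} + R_{3i} = n_i$. Assuming $\ell \geq 2$ (which is necessary for the union to be disconnected), I would then set $(R_1,R_2,R_3) = \sum_{i=1}^{\ell}(R_{1i},R_{2i},R_{3i})$ and note that $R_1 + R_2 + R_3 = \sum_{i=1}^\ell n_i = n$. Because each $n_i \geq \zeta \geq 1$ and $\ell \geq 2$, every $H_i$ has order strictly less than $n$, so the hypotheses of the converse direction of Theorem \ref{thm:disc} are satisfied. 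Applying that direction produces a disconnected graph $G = \bigcup_{i=1}^\ell H_i$ of order $n$ realized by $P$ with tile distribution $(R_1, R_2, R_3)$, which is the claim.

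Since Theorem \ref{thm:disc} carries the structural content, the argument itself requires no new computation; the delicate point lies entirely in the realizability of each individual summand. I expect the main obstacle to be the boundary behavior at $n_i = \zeta$. Definition \ref{zeta} places $\zeta$ in $S_P$, so $\zeta + k$ is an attained order for every $k \in \N$, but whether $\zeta$ itself is attained depends on the convention for $\N$. Indeed, in Example \ref{canonex} the value $\zeta$ need not belong to the set of orders realized by $P$ unless one reads $\N$ as containing $0$. I would therefore make explicit the convention that $0 \in \N$ (equivalently, that $\zeta$ is itself a realizable order), under which $n_i \geq \zeta$ guarantees realizability of each $n_i$; absent that convention the hypothesis should instead be read as $n_i > \zeta$. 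Once this point is pinned down, the chain of implications above closes immediately.
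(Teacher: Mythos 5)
Your proof is correct and takes essentially the same route as the paper, whose own proof is simply the one-line observation that the result is immediate from Definition \ref{zeta} and Theorem \ref{thm:disc}; you have filled in exactly those details (realizing each $H_i$ via the lower density bound, summing tile distributions, and invoking the converse direction of the theorem). Your two flagged caveats—the implicit requirement $\ell \geq 2$ and the boundary question of whether the order $\zeta$ itself is attained—are both fair, and the paper's usage (e.g.\ the example with $\zeta = 7$, where it ``realizes a graph of every order greater than or equal to order 7'') supports your preferred convention that $\zeta$ is itself a realizable order.
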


\begin{proof}
The proof is immediate from Definition \ref{zeta} and Theorem \ref{thm:disc}.
\end{proof}

\begin{ex}



Consider the pot $P=\{\{a^7\},\{\hat{a}^4\},\{\hat{a}\}\}$. From computational software, it appears that $\zeta = 7$; that is, $P$ realizes a graph of every order greater than or equal to order 7. Thus, $P$ will realize a disconnected graph of order 15 in which one component is a subgraph of order 7 and the other component is a subgraph of order 8. However,  $P$ will also realize a disconnected graph of order 12 (See Figure \ref{fig:714s12}). This shows that the converse of Corollary \ref{cor:DisconnectedGCD1} is not necessarily true.

\begin{figure}[H]
\begin{minipage}{0.4\textwidth}
\begin{center}
\includegraphics[width=0.75\textwidth]{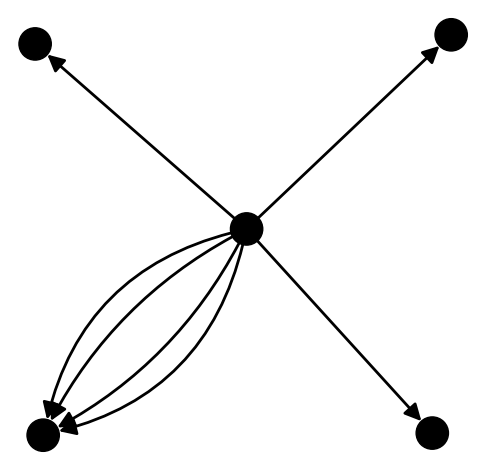}
\end{center}
\end{minipage}
\hfill
\begin{minipage}{0.4\textwidth}
\begin{center}
\includegraphics[width=0.45\textwidth]{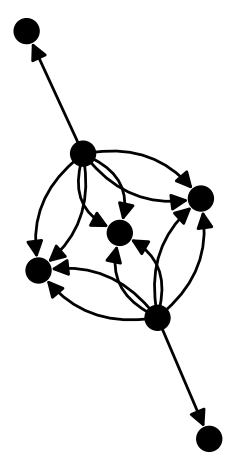}
\end{center}
\end{minipage}
\caption{Disconnected graph of order 12 for $\{\{a^7\},\{\hat{a}^4\},\{\hat{a}\}\}$}
\label{fig:714s12}
\end{figure}
\end{ex}

Given a tile distribution $(R_1, R_2, R_3)$ for the pot $P$, the following theorem establishes a relationship between $R_1$ and $R_2$ that indicates when any graph realized by $P$ is necessarily disconnected. 

\begin{thm}\label{thm:pathdisc}
Let $G \in \mathcal{O}(P)$ and let $(R_1, R_2, R_3)$ be a tile distribution that constructs $G$. If $1+R_2(e_2-1) < R_1$, then $G$ is disconnected.
\end{thm}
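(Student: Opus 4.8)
The plan is to prove the contrapositive: assuming $G$ is connected, I will derive the inequality $R_1 \leq 1 + R_2(e_2-1)$. The starting point is a structural observation about how edges arise in $\mathcal{O}(P)$. Since $t_1 = \{a^{e_1}\}$ carries only unhatted ends while $t_2 = \{\hat{a}^{e_2}\}$ and $t_3 = \{\hat{a}\}$ carry only hatted ends, every matched edge joins a $t_1$-vertex (a tail, out-directed) to a $t_2$- or $t_3$-vertex (a head, in-directed). Consequently $G$ is bipartite, with one part $S$ consisting of the $R_1$ vertices of type $t_1$ and the other part $T$ consisting of the $R_2 + R_3$ vertices of types $t_2$ and $t_3$; there are no edges inside $S$ nor inside $T$. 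Moreover, each $t_3$-vertex has degree exactly $1$, so the $t_3$-vertices are leaves, and no two of them are adjacent.

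Next I would pass to the reduced graph $G'$ obtained from $G$ by deleting all $R_3$ vertices of type $t_3$. Because each such vertex is a leaf and no two are mutually adjacent, the remaining $t_3$-vertices stay leaves throughout, and deleting them one at a time never destroys connectivity of what remains; hence $G'$ is connected (in the degenerate case $R_1 + R_2 \leq 1$ the claimed inequality holds trivially and can be checked directly). The graph $G'$ has exactly $R_1 + R_2$ vertices, namely the sources and the $t_2$-sinks. Its edges are precisely the edges of $G$ not incident to a $t_3$-vertex; since the $R_3$ deleted edges are exactly those landing on $t_3$-vertices, $G'$ has $R_1 e_1 - R_3 = R_2 e_2$ edges, using the balance equation $R_1 e_1 = R_2 e_2 + R_3$ coming from the first row of $M_P$.

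The final step is the spanning-tree count. A connected (multi)graph on $R_1 + R_2$ vertices has at least $R_1 + R_2 - 1$ edges, so $R_2 e_2 \geq R_1 + R_2 - 1$, which rearranges to $R_1 \leq 1 + R_2(e_2 - 1)$. Taking the contrapositive, the hypothesis $1 + R_2(e_2-1) < R_1$ forces $G$ to be disconnected, as desired.

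I expect the only delicate point to be the justification that deleting the $t_3$-leaves preserves connectivity, together with the treatment of the small degenerate configurations (for instance $R_2 = 0$, where connectivity already forces $R_1 \leq 1$, consistent with the bound); the edge count and the concluding inequality are then routine. It is worth emphasizing that the argument uses only the source/sink/leaf degree structure of the three tile types and the balance equation, so it requires neither the standing assumption $e_1 \geq e_2$ nor any appeal to Theorem \ref{thm:disc}.
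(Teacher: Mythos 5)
Your proposal is correct and is essentially the paper's argument run in contrapositive form: both reduce to the subgraph on the $t_1$- and $t_2$-vertices (the $t_3$-vertices being degree-one leaves), count its $e_2R_2$ edges, and compare against the spanning-tree bound $|E| \geq |V| - 1$ to obtain $R_1 \leq 1 + R_2(e_2-1)$. If anything, your version is more careful than the paper's, which informally asserts that maximizing the $t_1$-vertices forces the component to be a tree and leaves the connectivity of the restricted subgraph implicit, whereas you explicitly verify that deleting the pairwise nonadjacent $t_3$-leaves preserves connectivity and derive the edge count from the balance equation $e_1R_1 = e_2R_2 + R_3$.
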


\begin{proof}
 Suppose $G \in \mathcal{O}(P)$ where $G = H_1 \cup H_2$ and $H_1 \cap H_2 = \emptyset$. 
 We will show that despite maximizing the number of vertices of tile-type $t_1$ in $H_1$, the subgraph $H_2$ will be nonempty (i.e. $G$ is disconnected). 

If $H_2$ is empty, then $H_1$ must contain $R_2$ vertices of tile type $t_2$. To maximize the number of vertices of tile-type $t_1$, $H_1$ must be acylic; i.e. $H_1$ is a tree. Consider the subgraph of $H_1$ whose vertex set is only vertices of tile-types $t_1$ and $t_2$; call this subgraph $H_1'$. Since there are $R_2$ vertices of tile-type $t_2$,  there must be $e_2R_2$ edges  and $e_2R_2 + 1$ vertices in $H_1'$. Thus, there can be at most $e_2R_2 + 1 - R_2$ vertices of tile-type $t_1$ in $H_1'$ and hence, in $H_1$. 

If $1 + R_2(e_2-1) < R_1$, then $V(H_2)$ must contain vertices labeled with tile-type  $t_1$. Therefore, $G$ will be disconnected.
\end{proof}


The question of when connected graphs are realized by $P$ remains open at the time of writing. The algorithms in Section \ref{sec:Algorithms} provide a partial answer to this question.

\section{Connected Graph Algorithms} \label{sec:Algorithms}
Section \ref{sec:Results} establishes the conditions under which a pot of the form $P = \{\{a^{e_1}\}, \{\hat{a}^{e_2}\}, \{\hat{a}\}\}$  realizes a connected or disconnected graph. 
In this section, we provide two algorithms which will construct a connected graph from $P$. 
Theorem \ref{thm:pathdisc} suggests a connected graph may exist if $1 + R_2(e_2-1) \geq R_1$ and the desired order for a graph satisfies Theorem \ref{mainthm} or \ref{thm:lwrbnd}; the algorithms that follow rely on this inequality. 

\subsection{Path Algorithm}
\label{Alg:Path}

If $1 + R_2(e_2-1) \geq R_1$, then the following algorithm will output a connected graph. Note that the algorithm has been written based upon the assumption that $R_1 \geq R_2$. If $R_1 < R_2$, then the roles of $R_1$ and $R_2$, and $t_1$ and $t_2$ can be swapped in steps 1, 2, and 3 in order to produce a connected graph.

\begin{center}
	{\parbox{5.5in}{\raggedright
		\textbf{Input:} A pot $P = \{t_1 = \{a^{e_1}\}, t_2 = \{\hat{a}^{e_2}\}, t_3 =\{\hat{a}\}\}$ with corresponding tile distribution $(R_1, R_2, R_3)$  \\
		\textbf{Output:} A labeled connected graph of order $R_1 + R_2 + R_3$}}
\end{center}

\begin{enumerate}
    \item Form a path graph on $2R_2-1$ vertices where the vertices are labeled as in Figure \ref{fig:pathgraph}. That is, let \[\lambda(v_k)=
    \left\{\begin{array}{l}
    t_2 \text{ if } k \text{ odd} \\
    t_1 \text{ if } k \text{ even},
    \end{array}\right.\]
    where $k \in \{1,\ldots,2R_2-1\}.$
    
    
    \begin{figure}[H]
    \begin{center}
    \includegraphics[width=0.7\textwidth]{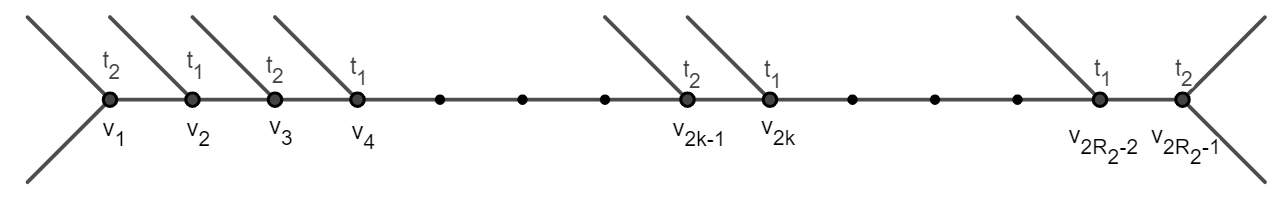}
    \end{center}
    \caption{Graph after step 1 of Algorithm \ref{Alg:Path} including unmatched half-edges on $t_1$ and $t_2$}
    \label{fig:pathgraph}
    \end{figure}
    
    \item  If $R_1 - (R_2 - 1) < e_2 - 1$, then attach one half-edge from each of the remaining $R_1 - (R_2 - 1)$ copies of $t_1$ to vertex $v_1$. Go to step \ref{Alg:AfterLoop}. Else, attach one half-edge from each of $e_2-1$ copies of $t_1$ to vertex $v_1$. Set $\texttt{counter} = R_1 - (R_2 - 1) - (e_2-1)$.
   
    \item  If $\texttt{counter} < e_2 - 2$, then attach one half-edge from each of the remaining $\texttt{counter}$ copies of $t_1$ to vertex $v_3$. Else, attach one half-edge from each of $e_2 - 2$ copies of $t_1$ to vertex $v_3$. Update $\texttt{counter} = \texttt{counter}  - (e_2-2)$.  Continue in this way sequentially for vertices $v_5$, $v_7, \ldots$ until $\texttt{counter} = 0$. Note if $\texttt{counter} > 0$  at the end of the sequence (i.e. at vertex $v_{2R_2 -1}$) it may be necessary to attach one half-edge from $e_2 - 1$ copies of $t_1$ to vertex $v_{2R_2 -1}$.
   
    \item \label{Alg:AfterLoop} Attach any unpaired half-edges from the $t_2$ tile types to any unpaired half-edges from the $t_1$ tile types. 
 
    \item 
    Attach each $t_3$ to an unpaired half-edge from $t_1$. 
\end{enumerate}

\begin{proof}
Let $G$ be a graph constructed by Algorithm \ref{Alg:Path}. We note that by construction, $R_2$ vertices are labeled $t_2$ in Step 1. At the end of Step 3, $R_1$ vertices are labeled $t_1$ and at the end of Step 5, $R_3$ vertices are labeled $t_3$. Thus, the graph constructed is realized by $P$.

We next prove that $G$ is a connected graph by showing there are no unmatched half-edges by the end of the algorithm.

In the multiset $P_\lambda(G)$, 
there are $e_1R_1$ total half-edges labeled $a$ associated to tile type $t_1$. In Step 1 of the algorithm, there are $2(R_2 - 1)$ half-edges labeled $a$ that are joined to half-edges labeled $\hat{a}$, and in Steps 2 and 3, there are an additional $R_1 - (R_2 - 1)$ half-edges labeled $a$ joined to half-edges labeled $\hat{a}$. At the end of Step 3, there are $e_1R_1 - 2(R_2 - 1) - (R_1 - (R_2 - 1)) = (e_1-1)R_1 - R_2 + 1$ half-edges labeled $a$ that are unmatched.

In $P_\lambda(G)$, there are $e_2R_2$ total half-edges labeled $\hat{a}$ associated to tile type $t_2$. In Step 1, $2 + 2(R_2 - 2)$ half-edges labeled $\hat{a}$ are joined to half-edges labeled $a$. In Steps 2 and 3, $R_1 - (R_2 - 1)$ half-edges labeled $\hat{a}$ are joined to half-edges labeled $a$. Thus, at the end of Step 3, there are $e_2R_2 - (2 + 2(R_2 - 2)) - (R_1 - (R_2 - 1) = (e_2-1)R_2 - R_1 + 1$ half-edges labeled $\hat{a}$ that are unmatched.

Note that 
    \[
        (e_1-1)R_1 - R_2 + 1 - \left((e_2-1)R_2 - R_1 + 1\right) = e_1R_1 - e_2R_2. 
    \]
Since $e_1 > e_2$ and $R_1 \geq R_2$, then $e_1R_1 - e_2R_2 > 0$. That is, there must be more unmatched half-edges labeled $a$ than $\hat{a}$ at the end of Step 3. This ensures that all unmatched half-edges labeled $\hat{a}$ will be joined to half-edges labeled $a$ in Step 4. 
    
There will be exactly $e_1R_1 - e_2R_2$ unmatched half-edges labeled $a$ at the end of Step 4. Since $e_1R_1 - e_2R_2 = R_3$ by Equation \ref{equ:system}, then Step 5 guarantees all remaining half-edges can be matched to a vertex labeled $t_3$. Thus, there are no unmatched half-edges by the end of the algorithm.

\end{proof}

\begin{thm}
If $1+R_2(e_2-1) \geq R_1$, then there exists a connected graph $G \in \mathcal{O}(P)$.
\end{thm}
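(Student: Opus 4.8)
The plan is to reduce this theorem to the Path Algorithm that immediately precedes it. The condition $1 + R_2(e_2-1) \geq R_1$ in the hypothesis is precisely the entry condition for the algorithm, so the natural strategy is constructive: given the inequality, exhibit a graph $G \in \mathcal{O}(P)$ that the algorithm produces, and argue that it is connected. First I would fix a tile distribution $(R_1, R_2, R_3)$ satisfying $1 + R_2(e_2-1) \geq R_1$; without loss of generality assume $R_1 \geq R_2$ (otherwise swap the roles of $t_1$ and $t_2$ and the counts $R_1, R_2$, exactly as noted in the algorithm's preamble). I would then run the Path Algorithm on this input and invoke the correctness proof already established for it, which shows that the output is a graph realized by $P$ with no unmatched half-edges.

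The key steps are as follows. I would first observe that Step 1 produces a connected path graph on $2R_2 - 1$ vertices, so connectivity is guaranteed at the outset for all vertices labeled $t_1$ or $t_2$ that lie on the path. Next I would note that every additional copy of $t_1$ introduced in Steps 2 and 3 is attached by a half-edge directly to one of the odd-indexed path vertices $v_1, v_3, v_5, \ldots$, hence each such vertex is joined to the existing connected structure. In Step 4 the remaining unmatched half-edges on $t_2$ vertices (all of which already lie in the connected component) are joined to half-edges on $t_1$ vertices, and in Step 5 each $t_3$ vertex is attached by its single half-edge to a $t_1$ vertex. Since every vertex is thereby linked to the central path, the resulting graph has a single connected component. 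The previously proven correctness of the algorithm supplies the accounting showing that all half-edges get matched, so $G$ is in fact a complete complex realized by $P$.

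The genuine content of the argument, and the step I expect to be the main obstacle, is verifying that the algorithm does not stall — that there are always enough copies of $t_1$ (and enough open half-edges on the path vertices) to carry out the attachments without leaving a $t_1$ or $t_2$ vertex isolated. This is exactly where the hypothesis $1 + R_2(e_2-1) \geq R_1$ enters: it bounds the number of extra $t_1$ copies, namely $R_1 - (R_2 - 1)$, by the total spare capacity $e_2 - 1$ at $v_1$ plus $e_2 - 2$ at each of the remaining $R_2 - 1$ odd vertices, so that the counter in Step 3 can be driven to zero before the path is exhausted. I would confirm this capacity count is consistent with $1 + R_2(e_2-1) \geq R_1$ and that the half-edge tallies from the algorithm's correctness proof (yielding $e_1R_1 - e_2R_2 = R_3 \geq 0$ unmatched $a$-half-edges after Step 4) leave precisely enough for Step 5. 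With the no-stall condition and the half-edge accounting in hand, connectivity follows because every vertex traces a path back to the central spine, completing the proof.
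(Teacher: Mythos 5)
Your proposal takes exactly the paper's route: the paper states that this theorem is immediate from the Path Algorithm, whose correctness proof supplies the same half-edge accounting you invoke (unmatched $a$'s exceed unmatched $\hat{a}$'s after Step 3, and $e_1R_1 - e_2R_2 = R_3$ closes out Step 5), and your no-stall analysis is in fact more explicit than the paper's, which leaves that verification implicit in the algorithm's entry condition. One small correction to your capacity tally: the far endpoint $v_{2R_2-1}$ also has $e_2-1$ (not $e_2-2$) spare half-edges — the algorithm notes this explicitly — giving total capacity $2(e_2-1)+(R_2-2)(e_2-2) = R_2(e_2-2)+2$ for the $R_1-(R_2-1)$ extra $t_1$ tiles, which is precisely equivalent to $R_1 \leq 1+R_2(e_2-1)$; with your count of $(e_2-1)+(R_2-1)(e_2-2)$ the boundary case $R_1 = 1+R_2(e_2-1)$ would appear to stall when it does not.
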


The proof of this theorem is immediate from Algorithm \ref{Alg:Path}.

\begin{ex}
    Consider $P = \{\{a^4\}, \{\hat{a}^3\}, \{\hat{a}\}\}$. $P$ realizes a graph of order 29 with a tile distribution of $(7,3,19)$. The output of Algorithm \ref{Alg:Path} is shown in Figure \ref{fig:pathex}.

    \begin{figure}[H]
        \centering
        \includegraphics[width=0.5\textwidth]{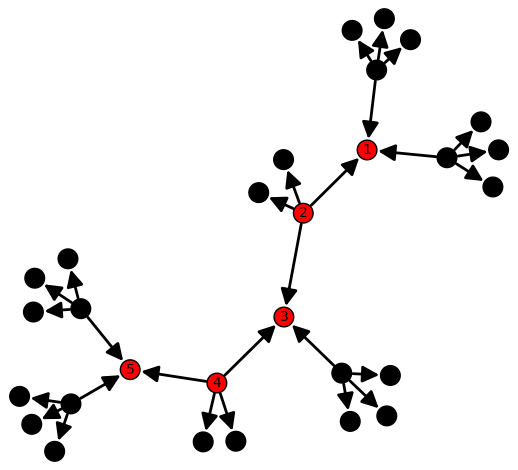}
        \caption{A graph of order 29 constructed using Algorithm \ref{Alg:Path}}
        \label{fig:pathex}
    \end{figure}
\end{ex}


\subsection{Cycle Algorithm}\label{Alg:cycle}

Ring structures naturally occur in many biological systems \cite{chen2022rings,ertl2021rings}. 
For this reason, we have created an algorithm to construct a connected graph from a cycle graph. The limitation that occurs is that this algorithm only works when $R_1 \leq R_2$.


\begin{center}
	{\parbox{5.5in}{\raggedright
		\textbf{Input:} A pot of the form $P = \{t_1 = \{a^{e_1}\}, t_2 = \{\hat{a}^{e_2}\}, t_3 = \{\hat{a}\}\}$ with corresponding tile distribution $R_1,R_2,R_3$. \\
		\textbf{Output:} A labeled connected graph of order $R_1 + R_2 + R_3$.}}
\end{center}

\begin{enumerate}
    \item Form a cycle graph on $2R_1$ vertices. Alternate the labels on the vertices for $t_1$ and $t_2$. That is, choose a vertex to be $v_1$, then set $\lambda(v_{2k-1}) = t_1$ and $\lambda({v_{2k}}) = t_2$ for $k \in \{1,\ldots,R_1\}$. See Figure \ref{fig:cydia}.
    
    \begin{figure}[H]
    \begin{center}
    \includegraphics[width=0.5\textwidth]{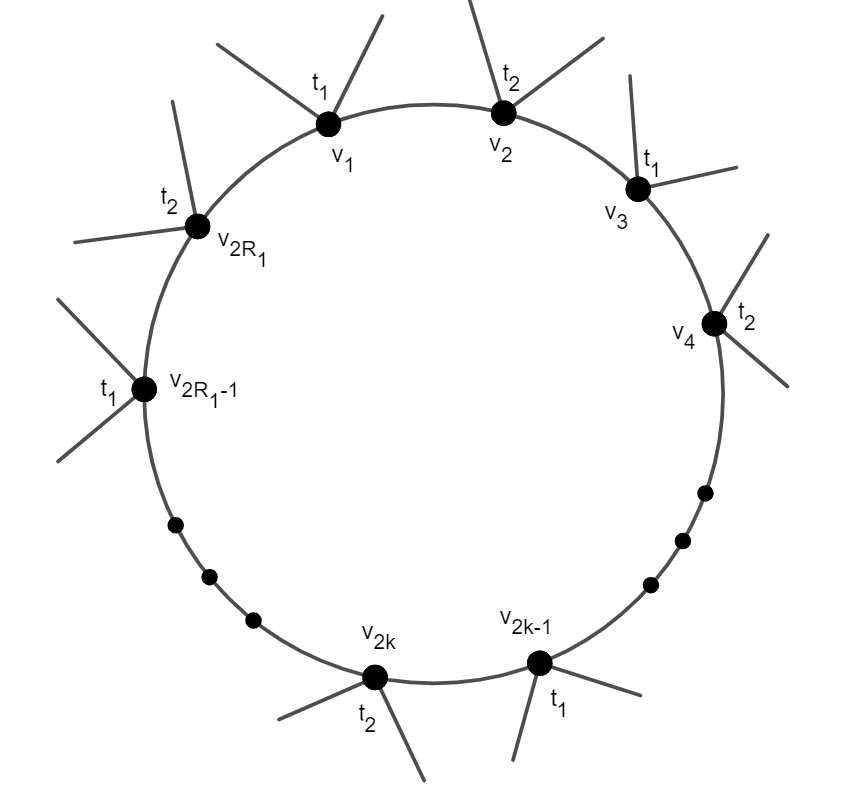}
    \end{center}
    \caption{Graph after step 1 of Algorithm \ref{Alg:cycle} including unmatched half-edges on $t_1$ and $t_2$}
    \label{fig:cydia}
    \end{figure}
    
    \item Attach $\lfloor \frac{e_2-2}{2} \rfloor$ half-edges from $v_{2k}$ to $v_{2k-1}$ and attach $\lceil \frac{e_2-2}{2} \rceil$ half-edges from $v_{2k}$ to $v_{2k+1}$ for each $k \in \{1,\ldots,R_1\}$. Due to the cyclic subgraph from step 1, we note $v_1 = v_{2k+1}$ for this process.
    \item \label{Alg:matcht2} 
    If $R_2 - R_1 < e_1 - e_2$, then attach the remaining $R_2 - R_1$ copies of $t_2$ to the half-edges of $t_1$ on $v_1$ using exactly one half-edge for each copy of $t_2$. Go to step \ref{Alg:AfterLoop1}. Else, attach $e_1 - e_2$ copies of $t_2$ to the half-edges of $t_1$ on $v_1$ using exactly one half-edge of each copy of $t_2$. Update $\texttt{counter} = R_2 - R_1 - (e_1 - e_2)$ 
    \item \label{Alg:ExtraT3}If $\texttt{counter} < e_1 - e_2$, then attach \texttt{counter} copies of $t_2$ to the half-edges of $t_1$ on $v_3$ using exactly one half-edge of each copy of $t_2$. Else, attach $e_1 - e_2$ copies of $t_2$ to the half-edges of $t_1$ on $v_3$ using exactly one half-edge for each copy of $t_2$. Update $\texttt{counter} = \texttt{counter} - (e_1 - e_2)$. Continue in this way sequentially for vertices $v_5, v_7, \ldots$ until $\texttt{counter} = 0$. 
    \item \label{Alg:AfterLoop1} If $R_1 \ne R_2$, attach $(R_2-R_1)(e_2-1)$ unmatched half-edges from the vertices labeled with $t_1$ to the unmatched half-edges of the vertices labeled with $t_2$. Else, move to step \ref{Alg:Not3s}.
    \item \label{Alg:Not3s} Attach $R_1(e_1-e_2) - e_2(R_2 - R_1)$ half-edges labeled $a$ from vertices labeled with $t_1$ to $R_3$ vertices labeled with $t_3$.
\end{enumerate}
\begin{proof}

Form the graph $G$ with the labeling prescribed by Step 1 of Algorithm \ref{Alg:cycle}. Our aim is to show that all the remaining tiles can be attached to the vertices in $G$. 

After steps 1 and 2, there are $ e_1 - \left(\lfloor \frac{e_2-2}{2} \rfloor + \lceil \frac{e_2-2}{2} \rceil \right) - 2 = e_1 - (e_2-2) - 2 = e_1 - e_2$ unpaired half-edges on \textit{each} vertex labeled $t_1$. Thus, there is a total of $R_1(e_1-e_2)$ unmatched $a$'s. Additionally, all tiles of type $t_1$  and $R_1$ tiles of type $t_2$ have been used to label the vertices of $G$, and all of the arms of the tiles of type $t_2$ in $G$ have been matched.

We must ensure there are sufficiently many unmatched edges labeled $a$ for the remaining tiles of type $t_2$, which is equivalent to showing $R_2 - R_1 < R_1 (e_1-e_2)$. This can be shown using the substitution $e_1R_1 = e_2R_2 + R_3$ from Equation \ref{equ:system} as follows:

    \begin{align*}
        R_1 (e_1-e_2) &= e_1R_1 - e_2R_1 \\
        &= R_3 + e_2R_2 - e_2R_1 \\
        &= R_3 + e_2(R_2 - R_1) \\
        &\geq e_2(R_2 - R_1) \\
        &> R_2 - R_1.
    \end{align*}

After step 5, the tiles of type $t_2$ that were added in steps \ref{Alg:matcht2} and \ref{Alg:ExtraT3} have $(R_2-R_1)(e_2-1)$ unmatched half-edges. 
To ensure each half-edge can be matched to a free half-edge on a tile of type $t_1$, we must show $(R_2-R_1)(e_2-1) \leq R_1(e_1-e_2)-(R_2-R_1)$. Using the substitution $e_1R_1 - e_2R_2 = R_3$, we have:

    \begin{align*}
        R_1(e_1-e_2)-(R_2-R_1) &= e_1R_1 - e_2R_1 - R_2 + R_1 \\
        &= e_1R_1 - e_2R_1 - e_2R_2 + e_2R_2 - R_2 + R_1 \\
        &= e_1R_1 - e_2R_2 + (e_2R_2 - R_2 - e_2R_1 + R_1) \\
        &= R_3 + (R_2-R_1)(e_2-1) \\
        &\geq (R_2-R_1)(e_2-1).
    \end{align*}
    
With all of the arms from tiles of type $t_2$ matched, we finally need to check that there are exactly the number of unmatched half-edges on tiles of type $t_1$ as there are tiles of type $t_3$. That is, we need to show $R_3 = R_1(e_1-e_2)-e_2(R_2-R_1)$:
    
    \begin{align*}
        R_1(e_1-e_2)-e_2(R_2-R_1) &= e_1R_1 - e_2R_1 - e_2R_2 + e_2R_1 \\
        &= e_1R_1 - e_2R_2 \\
        &= R_3.
    \end{align*}

Thus, $G$ is a connected graph and $G \in \mathcal{O}(P)$ using tile distribution $(R_1, R_2, R_3)$.
\end{proof} 

\begin{ex}
    Consider $P = \{\{a^6\}, \{\hat{a}^4\}, \{\hat{a}\}\}$. Then $P$ realizes a graph of order 19 with a tile distribution of $(7,10,2)$. The output of Algorithm \ref{Alg:cycle} is shown in Figure \ref{fig:cycleex}.

    \begin{figure}[H]
        \centering
        \includegraphics[width=0.4\textwidth]{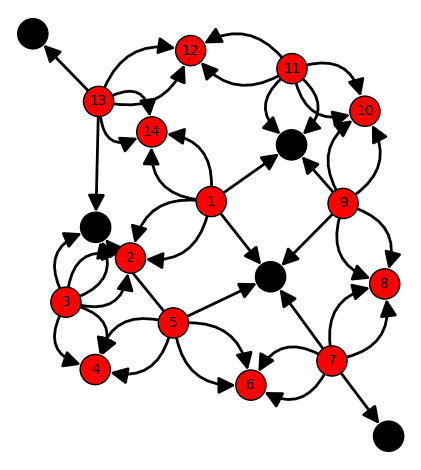}
        \caption{A graph of order 19 constructed using Algorithm \ref{Alg:cycle}}
        \label{fig:cycleex}
    \end{figure}
\end{ex}




\section{Conclusion} \label{sec:Conclusion}

We have shown that, given a pot of tiles with one bond-edge type and a 1-armed tile, we can determine the orders of the complete complexes that can be realized by the pot. To a lesser extent, we can also characterize whether these complete complexes will be disconnected or connected complexes. At the time of writing, the entire case involving a pot with one bond-edge type (i.e. a $2 \times p$ construction matrix) is close to being completely understood. Three primary questions remain to be explored:

\begin{enumerate}

\item What is a formula for $\zeta$ in terms of $e_1$ and $e_2$?

\item Does there exist a pot where $\zeta = \eta$?

\item Do these results extend to pots of the form $P=\{\{a^{e_1}\},\{\hat{a}^{e_2}\},\{\hat{a}^{e_3}\}\}$ where $1 < e_3 < e_2$?

\end{enumerate}

Although the first question remains open, our results provide a lower bound which is ``close" to $\zeta$. This means that, for any pot $P$ satisfying the relatively prime condition, there are only finitely many orders to check between the order of the minimal graph of $P$ and the corresponding $\eta$.

With the third question, we have some indication that the results in this  paper extend to pots that do not possess a 1-armed tile, but more research is needed in this area. Considering the conditions that occur when $\text{gcd}(e_1+1,-e_2+1)=d \ne 1$, it would be reasonable to start in this setting rather than the relatively prime setting.

The difficulty of determining which graphs $G$ are in $\mathcal{O}(P)$ increases dramatically when moving from pots with one bond-edge type to pots with two bond-edge types. Preliminary research suggests the results here do not necessarily generalize to the two bond-edge type case. 

\newpage

\bibliographystyle{abbrvurl}
\bibliography{bib}

\end{document}